\documentclass[10pt,reqno]{article}
\usepackage{amsfonts,amssymb,amsmath,amsthm,latexsym,graphicx,xcolor,euscript,url,mathrsfs,dsfont,authblk,subfigure,enumerate,comment,tikz,ccaption,relsize}
\usepackage{mathtools}

\usepackage{pgfplots}
\pgfplotsset{compat=1.14}

\usepackage{hyperref}
\pdfstringdefDisableCommands{\def\eqref#1{(\ref{#1})}}

%\usepackage{refcheck} %  activer/dsactiver selon qu'on n'utilise ou pas hyperref
%\usepackage{xcolor}
%\definecolor{unbleu}{rgb}{0.03, 0.15, 0.4}

\hypersetup{
pdfborder = {0 0 0},
%colorlinks,
%linkcolor=unbleu,
%citecolor=unbleu,
%urlcolor=unbleu
}

%\setlength{\textheight}{22cm}
%\setlength{\textwidth}{16cm}
%\setlength{\parindent}{3em}
%\setlength{\oddsidemargin}{6mm}
%\setlength{\evensidemargin}{6mm}
%\setlength{\topmargin}{-6mm}
%\usepackage{ushort}

%\usepackage[citestyle=alphabetic,bibstyle=authortitle]{biblatex}
%\addbibresource{sandro_jr_biblio}

\newcommand{\dd}{\,{\mathrm d}}
\newcommand{\e}{\operatorname{e}} % pour faire des exponentielles avec un "e" droit mais surtour pour que LateX le gere comme un vrai operateur au niveau des espace
\newcommand{\var}{\operatorname{var}}
\newcommand{\X}{\mathcal{A}^\N} % raccourci pour l'espace des suites sur A
\newcommand{\A}{\mathcal{A}}

\newcommand{\N}{\ensuremath{\mathbb{N}}}      
\newcommand{\R}{\ensuremath{\mathds{R}}}

%%%%%%%%%%%%%%%%%%%%%%%%%%%%%%%%%%%%%%%%%%%%%%%%%%%%%%%%%%%%%%%%%%%%
\newtheorem{remark}{Remark}[section]
\newtheorem{lemma}{Lemma}[section]
\newtheorem{defi}{Definition}[section]
\newtheorem{theo}{Theorem}[section]
\newtheorem{prop}{Proposition}[section]

\newtheorem{notation}{Notation}[section]
%%%%%%%%%%%%%%%%%%%%%%%%%%%%%%%%%%%%%%%%%%%%%%%%%%%%%%%%%%%%%%%%%%%%

%\numberwithin{figure}{section}

%%%%%%%%%%%% BEGIN DOCUMENT
\begin{document}

%%%%%%%%%%%%%%%%%%%%%%%%%%%%%%%%%%%%%%%%%%%%%%%%%%%%%%%%%%%%%%%%%%%%%%
%\keywords{Chains of infinite order, perfect simulation, Markov approximation, $\bar{d}$-distance}
%%\subjclass[2000]{60G10 (primary), 60G99 (secondary).}
%\subjclass[2000]{Primary 60G10; Secondary 60G99}

%%%%%%%%%%%%%%%%%%%%%%%%%%%%%%%%%%%%%%%%%%%%%%%%%%%%%%%%%%%%%%%%%%

\title{Return-time $L^q$-spectrum for equilibrium states with potentials of summable variation}

\author[1]{M. Abadi
\thanks{{  Email: \texttt{leugim@ime.usp.br}}}}

\author[2]{V. Amorim
\thanks{{  Email: \texttt{vitoramorim@usp.br}}}}

\author[3]{J.-R. Chazottes
\thanks{{  Email: \texttt{chazottes@cpht.polytechnique.fr}}}}

\author[4]{S. Gallo
\thanks{{  Email: \texttt{sandro.gallo@ufscar.br}. \\VA acknowledges IFSP for financial support. SG acknowledges \'Ecole Polytechnique for financial support  and hospitality during a two-months stay, as well as for other short visits.  SG was supported by FAPESP  (BPE: 2017/07084-6) and CNPq (PQ 312315/2015-5 and  Universal  462064/2014-0). MA and SG acknowledge the FAPESP-FCT joint project between SP-Brazil and Portugal (19805/2014).}}}

%\author[3]{????
%\thanks{Email: \texttt{???@????}}}

\affil[1]{ {\small Instituto de Matem\'atica e Estat\'istica, Universidade de S\~ao Paulo, Brasil}}

\affil[2]{ {\small Instituto Federal de S\~ao Paulo, Brasil}}

\affil[3]{ {\small CPHT, CNRS, IP Paris, Palaiseau, France}}

\affil[4]{ {\small Departamento de Estat\'istica, Universidade Federal de S\~ao Carlos, Brasil}}

\date{Dated: \today}

\maketitle

\abstract{
Let $(X_k)_{k\geq 0}$ be a stationary and ergodic process with joint distribution $\mu$ where the random variables $X_k$ take values in a finite set $\mathcal{A}$. Let $R_n$ 
be the first time this process repeats its first $n$ symbols of output. It is well-known that $\frac{1}{n}\log R_n$ converges almost surely to the entropy of the process. Refined 
properties of $R_n$ (large deviations, multifractality, etc) are encoded in the return-time $L^q$-spectrum defined as
\[
\EuScript{R}(q)=\lim_n\frac{1}{n}\log\int R_n^q \dd\mu\quad (q\in\R)
\]
provided the limit exists.
We consider the case where $(X_k)_{k\geq 0}$ is distributed according to the equilibrium state of a potential $\varphi:\mathcal{A}^{\N}\to\R$ with summable variation,
and we prove that 
\[
\EuScript{R}(q)
=
\begin{cases}
P((1-q)\varphi) & \text{for}\;\; q\geq q_\varphi^*\\
\sup_\eta \int \varphi \dd\eta & \text{for}\;\; q<q_\varphi^{*}
\end{cases}
\]
where $P((1-q)\varphi)$ is the topological pressure of $(1-q)\varphi$, the supremum is taken over all shift-invariant measures, 
and $q_\varphi^*$ is the unique solution of $P((1-q)\varphi) =\sup_\eta \int \varphi \dd\eta$.
Unexpectedly, this spectrum does not coincide with the $L^q$-spectrum of $\mu_\varphi$, which is $P((1-q)\varphi)$,
and does not coincide with the waiting-time $L^q$-spectrum in general.
In fact, the return-time $L^q$-spectrum coincides with the waiting-time $L^q$-spectrum if and only if the equilibrium state of $\varphi$ is the measure of maximal entropy.
As a by-product, we also improve the large deviation asymptotics of $\frac{1}{n}\log R_n$. 
}

\tableofcontents

\newpage

%%%%%%%%%%%%%%%%%%%%%% SECTION %%%%%%%%%%%%%%%%%%%%%%%%%%%%%
\section{Introduction}

Consider the symbolic dynamical system $(\X,\mathscr{F},\mu,\theta)$ in which $\mathcal A$ is a finite alphabet, $\theta$ is the left shift map, and $\mu$ is a
shift-invariant probability measure, that is, $\mu\circ\theta^{-1}=\mu$. We are interested in the statistical properties of the return time $R_n(x)$, the first time the orbit of $x$ 
comes back in the $n$th cylinder $[x_0^{n-1}]=[x_0,\ldots,x_{n-1}]$ (that is, the set of all $y\in\X$ coinciding with $x$ on the first $n$ symbols\footnote{which is nothing but
the ball of center $x$ and radius $2^{-n-1}$ for the distance $d(x,y)=2^{-\inf\{k:x_k\neq y_k\}}$ which metrizes the product topology on $\X$.}).

%The main contribution of this paper is the calculation of the return time spectrum (the cumulant generating function), namely
%\begin{equation*}
%%\label{def-intro-return-time-spectrum}
%\EuScript{R}_\mu(q)
%=\lim_n\frac{1}{n}\log\int \e^{q \log R_n} \dd\mu
%=\lim_n\frac{1}{n}\log\int R_n^q \dd\mu
%\end{equation*}
%for a large class of shift-invariant ergodic probability measure $\mu$ on $\X$. 
%
%Below, we will explain this result in more details, as well as its implications and how it relates to the literature.

The main contribution of this paper is the calculation of the \emph{return-time $L^q$-spectrum} (or \emph{cumulant generating function}) in the class of equilibrium states (a 
subclass of shift-invariant ergodic measures, see Section \ref{subsec:setting}). More specifically, consider a potential $\varphi$ having summable variation (this includes 
H\"older continuous potentials for which the variation decreases exponentially fast).  Our main result, Theorem \ref{theo}, states that, if its unique equilibrium 
state, denoted by $\mu_\varphi$, is not of maximal entropy, then
\[
\EuScript{R}_{\mu_\varphi}(q)
:=\lim_n\frac{1}{n}\log\int R_n^q \dd\mu_\varphi=
\begin{cases}
P((1-q)\varphi) & \text{for}\;\; q\geq q_\varphi^*\\
\sup_\eta \int \varphi \dd\eta  & \text{for}\;\; q\le q_\varphi^*
\end{cases}
\]
where $P(\cdot)$ is the topological pressure, the supremum is taken over shift-invariant probability measures, and $q_\varphi^*\in \left]-1,0\right[$ is the unique solution of the equation
\[
P((1-q)\varphi) =\sup_\eta \int \varphi \dd\eta.
\]
We also prove that when $\varphi$ is a potential corresponding to the measure of maximal entropy, then $q^*_\varphi=-1$ and $\EuScript{R}_{\varphi}$ is piecewise linear (Theorem \ref{theo-suite}). 
In this case, and only in this case, the return-time spectrum coincides with the waiting-time $L^q$-spectrum $\EuScript{W}_{\varphi}(q)$ that was previously studied in \cite{chazottes/ugalde/2005} (see Section \ref{sec:basicsRecTimes} for definitions).  
It is fair to say that the expressions of $\EuScript{R}_{\varphi}(q)$ and $\EuScript{W}_{\varphi}(q)$ are unexpected, and that it is surprising that they only coincide $\mu_\varphi$ if the measure of maximal entropy.

Below we will list some implications of this result, and how it relates to the literature. 

\paragraph{The ansatz $R_n(x)\longleftrightarrow1/\mu_\varphi([x_0^{n-1}])$.} 
A remarkable result (\cite{ornstein/weiss/1993,shields-book}) is that, for any ergodic measure $\mu$, one has
\[
\lim_n\frac{1}{n} \log R_n(x)= h(\mu)\,,\quad \textup{for}\;\mu\textup{-almost every}\, x,
\]
where $h(\mu)=-\lim_n \frac{1}{n} \sum_{a_0^{n-1}\in\A^n} \mu\big(\big[a_0^{n-1}\big]\big) \log\mu\big(\big[a_0^{n-1}\big]\big)$ is the entropy of $\mu$.
Compare this result with the Shannon-McMillan-Breiman theorem which says that
\[
\lim_n -\frac{1}{n}\log\mu\big(\big[x_0^{n-1}\big]\big)= h(\mu)\,,\quad \textup{for}\;\mu\textup{-almost every}\, x \,.
\]
Hence, using return times, we don't need to know $\mu$ to estimate the entropy, but only to assume that we observe a typical output $x=x_0,x_1,\ldots$ of the process.
In particular, combining the two previous pointwise convergences, we can write $R_n(x) \asymp 1/\mu([x_0^{n-1}])$ for $\mu$-almost every $x$.\footnote{The symbol $
\asymp$ means equivalence if one take the log, then divide by $n$, and take $n\to\infty$.} This yields the natural ansatz 
\begin{equation}\label{ansatz}
R_n(x)\longleftrightarrow1/\mu_\varphi([x_0^{n-1}])
\end{equation}
when integrating with respect to $\mu_\varphi$. 
However, it is a consequence of our main result that this ansatz is not correct for the $L^q$-spectra. Indeed, for the class of equilibrium states we consider (see Section 
\ref{sec:basicsRecTimes})
\[
\lim_n \frac{1}{n} \log \sum_{a_0^{n-1}\in\A^n} \mu_\varphi\big(\big[a_0^{n-1}\big]\big)^{1-q}=P((1-q)\varphi), \,\forall q\in\R
\]
meaning that the $L^q$-spectrum of the measure and  $\EuScript{R}_{\mu_\varphi}(q)$ are different when $q<q^*_\varphi$. 

\paragraph{Fluctuations of return times.}
When $\mu_\varphi$ is the equilibrium state of a potential $\varphi$ of  summable variation, there is a uniform control of the measure of cylinders, in the sense that
$\log \mu_\varphi([x_0^{n-1}])=\sum_{i=0}^{n-1} \varphi(x_i^{\infty})\pm \text{Const}$, where the constant is independent of $x$ and $n$.
Moreover, $h(\mu_\varphi)=-\int \varphi \dd\mu_\varphi$, so it is tempting to think that the fluctuations of $\frac{1}{n}\log R_n(x)$ should be the same as that of
$-\frac{1}{n}\sum_{i=0}^{n-1} \varphi(x_i^{\infty})$, in the sense of the central limit and large deviation asymptotics.
Indeed,  when $\varphi$ is H\"older continuous, it was proved in \cite{collet/galves/schmitt/1999} that $\sqrt{n}\big(\log R_n/n-h(\mu)\big)$ converges in law to a Gaussian 
random variable $\mathcal{N}(0,\sigma^2)$,
where $\sigma^2$ is the asymptotic variance of $\big(\frac{1}{n}\sum_{i=0}^{n-1} \varphi(X_i^{\infty})\big)$.\footnote{which is $>0$ if and only if $\mu_\varphi$ is not the 
measure of maximal entropy.}
This was extended to potentials with summable variation in \cite{chazottes/ugalde/2005}. In plain words, $(\frac{1}{n}\log R_n(x))$ has the same central limit asymptotics as
$(\frac{1}{n}\sum_{i=0}^{n-1} \varphi(x_i^{\infty}))$.\footnote{Of course, we can indifferently take $\varphi$ or $-\varphi$.}

In \cite{collet/galves/schmitt/1999}, large deviation asymptotics of $(\frac{1}{n}\log R_n(x))$, when $\varphi$ is H\"older continuous, were also considered.  
It is proved therein that, on a sufficiently small (non explicit) interval around $h(\mu_\varphi)$, the so-called rate function coincides with the rate function  
of $(-\frac{1}{n}\sum_{i=0}^{n-1} \varphi(x_i^{\infty}))$. The latter is known to be the Legendre transform of $P((1-q)\varphi)$. Using the Legendre transform of the return time 
$L^q$-spectrum, a direct consequence of our main result (see Theorem \ref{theo:LD})  is that, when $\varphi$ has summable variation,  the coincidence of the rate functions 
holds on a much larger (and explicit, depending on $q^*_\varphi$) interval around $h(\mu_\varphi)$. In other words, we extend the large deviation result of
\cite{collet/galves/schmitt/1999} in two ways: we deal with more general potentials and we get a much larger interval for the values of large deviations. 

Notice that a similar result was deduced in \cite{chazottes/ugalde/2005} for the waiting time, based on the Legendre transform of the waiting-time $L^q$-spectrum. In any 
case,  this strategy cannot work to compute the rate functions of $\big(\frac{1}{n}\log R_n\big)$ and $\big(\frac{1}{n}\log W_n\big)$, because the corresponding $L^q$-spectra
fail to be differentiable. Obtaining the complete description of large deviation asymptotics for $\big(\frac{1}{n}\log R_n\big)$ and $\big(\frac{1}{n}\log W_n\big)$
is an open question up to date.

\paragraph{Relation to the return-time dimensions.}
Consider a general ergodic dynamical system $(M,T,\mu)$ and replace cylinders by (Euclidean) balls in the above return-time $L^q$-spectrum, that is,
consider the function $q\mapsto\int \tau_{B(x,\varepsilon)}^q(x) \dd\mu(x)$, where $\tau_{B(x,\varepsilon)}(x)$ is the first time the orbit of $x$ under $T$ comes back to the ball
$B(x,\varepsilon)$ of center $x$ and radius $\varepsilon$.
The idea is to introduce return-time dimensions $D_\tau(q)$ by postulating that $\int \tau_{B(x,\varepsilon)}^q(x) \dd\mu(x)\approx \varepsilon^{D_\tau(q)}$, as 
$\varepsilon\downarrow 0$. This was done in \cite{Haydn-Luevano-Mantica-Vaienti} (with a different `normalization' in $q$) and compared numerically with the classical 
spectrum of generalized dimensions $D_\mu(q)$ defined in a similar way, with
$\mu(B(x,\varepsilon))^{-1}$ instead of $\tau_{B(x,\varepsilon)}(x)$ ({geometric counterpart of the ansatz \eqref{ansatz}}). They studied a system of iterated functions in 
dimension one and numerically observed that return-time dimensions and generalized dimensions do not
coincide. This can be understood with analytical arguments. For recent progress, more references and new perspectives, see \cite{CFMVY2018}. Working with (Euclidean) 
balls in dynamical systems with a phase space 
$M$ of dimension higher than one is more natural than working with cylinders, but it is much more difficult. It is an interesting open problem to obtain an analog of our main 
result even for uniform hyperbolic systems. We refer to \cite{CFMVY2018} for recent developments.

\paragraph{Further recent literature.} Let us come back to large deviations for return times and comment on other results related to ours, beside
\cite{collet/galves/schmitt/1999}. 
In \cite{Jain2013OnLD}, the authors obtain the following result. For a $\phi$-mixing process with an exponentially decaying rate, and satisfying a property called 
`exponential rates for entropy', there exists an implicit positive function $I$ such that $I(0)=0$ and
\[
\mathds{P}\Big( \Big|\frac{1}{n}\log R_n-h\Big|>u\Big)\leq 2 \e^{-I(u)}\,,\; n\geq N(u)\,,
\]
where $h$ is the entropy of the process. In the same vein, \cite{CRS2018} considered the case of (geometric) balls in smooth dynamical systems.
% (instead of cylinder sets which are the natural sets to look at for processes on finite alphabets). 
%Finally, still in the context of smooth dynamical systems, let us mention that the analogue of $\EuScript{R}_{\mu_\varphi}(q)$ with balls instead of cylinders was considered 
%from the viewpoint of multifractal analysis to define `return-time dimensions', see \cite{CFMVY2018}.

\paragraph{A few words about the proof of the main theorem.}
For $q>0$, an important ingredient of the proof is an approximation of the distribution of $R_n(x)\mu_\varphi([x_0^{n-1}])$ by an exponential law, with a precise error term, 
recently proved in \cite{abadi/amorim/gallo/2020}.
Using this result, the computation of $\EuScript{R}_{\varphi}(q)$ is straightforward.
The range $q<0$ is much more delicate.
To get upper and lower bounds for $\log\int R_n^q \dd\mu_{\varphi}$, we have to partition $\X$ over {\em all} cylinders, in particular, we cannot only take into account cylinders 
which are `typical' for $\mu_\varphi$.
A crucial role is played by orbits which come back after less than $n$ iterations under the shift in cylinders of length $n$. Such orbits are closely related to periodic orbits.
What happens is roughly the following. There are two terms in competition in the `$\frac{1}{n}\log$ limit'. The first one is
\begin{equation}\label{first-term}
\sum_{a_0^{n-1}} \mu_\varphi\big([a_0^{n-1}]\cap \big\{T_{[a_0^{n-1}]}=\tau([a_0^{n-1}])\big\}\big)
\end{equation}
where $T_{[a_0^{n-1}]}(x)$ is the first time that the orbit of $x$ enters $[a_0^{n-1}]$, and $\tau([a_0^{n-1}])$ is the smallest first return time among all $y\in [a_0^{n-1}]$.
The second term is
\begin{equation}\label{second-term}
\sum_{a_0^{n-1}} \mu_\varphi([a_0^{n-1}])^{q}.
\end{equation}
Depending on the value of $q<0$, when we take the logarithm and then divide by $n$, the first term \eqref{first-term} will beat the second one in the limit $n\to\infty$, or vice-versa. 
Since the second term \eqref{second-term} behaves like $\e^{nP((1-q)\varphi)}$, and since we prove that the first one behaves like $\e^{n \sup_\eta \int \varphi \dd\eta}$,
this indicates why the critical value $q^*_\varphi$ shows up. The {asymptotic} behavior of the first term \eqref{first-term} is rather delicate to analyse (see Proposition \ref{prop:essential}), and is an important ingredient of the present paper.

\paragraph{Organisation of the paper.} 
The framework and the basic definitions are given in Section \ref{sec:setting}. In Section \ref{subsec:setting} we collect basic facts about equilibrium states and topological pressure.
In Section \ref{sec:basicsRecTimes} we define $L^q$-spectra for measures, return times and waiting times. In Section \ref{sec:results} we give our main results and two simples  examples in which all the involved quantities can be explicitly computed. The proofs are given in Sections \ref{sec:proofs}.

%%%%%%%%%%%%%%%%%%%%%% SECTION %%%%%%%%%%%%%%%%%%%%%%%%%%%%%
\section{Setting and basic definitions}\label{sec:setting}

\subsection{Shift space and equilibrium states}\label{subsec:setting}

\paragraph{Notation and framework.} 

For any sequence $(a_k)_{k\geq 0}$ where $a_k\in\mathcal{A}$, we denote the partial sequence (`string')
$(a_i,a_{i+1},\ldots,a_j)$ by $a_i^j$, for $i<j$. (By convention, $a_i^i:=a_i$.) In particular, $a_i^\infty$ denotes the sequence $(a_k)_{k \geq i}$.

We consider the space $\X$ of infinite sequences $x=(x_0,x_1,\ldots)$ where $x_i\in\mathcal{A}$, $i\in\N:=\{0,1,\ldots\}$.
Endowed with the product topology, $\X$ is a compact space.
The cylinder sets $[a_i^j]=\{x \in \X: x_i^j=a_i^j\}$, $i,j\in\N$, generate the Borel $\sigma$-algebra $\mathscr{F}$.
Now define the shift $\theta:\X\to \X$ by $(\theta x)_i=x_{i+1}$, $i\in\N$. Let $\mu$ be a shift-invariant  probability
measure on $\mathscr{F}$, that is, $\mu(B)=\mu(\theta^{-1}B)$ for each cylinder $B$. 
We then consider the stationary process $(X_k)_{k\geq 1}$ on the probability space $(\X,\mathscr{F},\mu)$, where $X_n(x)=x_n$, $n\in\N$.
We will use the short-hand notation $X_i^j$ for $(X_i,X_{i+1},\ldots,X_j)$, where $i<j$. 
As usual, $\mathscr{F}_i^j$ is the $\sigma$-algebra generated by $X_i^j$, where $0\leq i\leq j\leq \infty$.
We denote by $\mathscr{M}_\theta(\X)$ the set of shift-invariant probability measures. This is a compact set in the weak topology.

\paragraph{Equilibrium states and topological pressure.} 

We refer to \cite{walters1975ruelle} and \cite{bowen/1975} for details on the material of this section.
We consider potentials of the form $\beta\varphi$ where $\beta\in\R$ and $\varphi:\X\to\R$ is of summable variation, that is
\[
\sum_n \var_n(\varphi)<\infty
\]
where
\[
\var_n(\varphi)=\sup\big\{|\varphi(x)-\varphi(y)| : x_0^{n-1}=y_0^{n-1}\big\}.
\]
Obviously $\beta\varphi$ is of summable variation for each $\beta$, and it has a unique equilibrium state denoted by $\mu_{\beta\varphi}$. This means
that it is the unique shift-invariant measure such that 
\begin{equation}\label{VP}
\sup_{\eta\in \mathscr{M}_\theta(\X)}\left\{h(\eta)+\int \beta\varphi \dd\eta\right\}=h(\mu_{\beta\varphi})+ \int \beta\varphi \dd\mu_{\beta\varphi} =P(\beta\varphi)
\end{equation}
where $P(\beta\varphi)$ is the topological pressure of $\beta\varphi$.

For convenience we `normalize' $\varphi$ as explained in \cite[Corollary 3.3]{walters1975ruelle}, which implies in particular that 
\[
P(\varphi)=0\quad\text{and}\quad\varphi<0.
\]
This gives the same equilibrium state $\mu_\varphi$. (Since $\sum_{a\in\A}\e^{\varphi(ax)}=1$ for all $x\in\X$, we have $\varphi<0$.)

The maximal entropy is $\log |\A|$ and, because $P(\varphi)=0$, it is the equilibrium state of the potentials of the form $u-u\circ \theta-\log |\A|$ for some continuous function $u:\X\to\R$.

We will use the following property, often referred to as the `Gibbs property'.
There  exists a constant $C=C_\varphi\geq 1$ such that for any $n\geq 1$, any cylinder $[a_0^{n-1}]$ and any $x\in [a_0^{n-1}]$
\begin{equation}\label{control-cyl-gmeasures-summable}
C^{-1}\leq \frac{\mu_\varphi([a_0^{n-1}])}{\exp\big(\sum_{k=0}^{n-1} \varphi(x_k^\infty)\big)}\leq C\,.
\end{equation}
See \cite{Parry-Pollicott-Asterisque} where one can easily adapt the proof of their Proposition 3.2 to generalize their Corollary 3.2.1
to get \eqref{control-cyl-gmeasures-summable} with $C=\exp(\sum_{k\geq 1}\var_k(\varphi))$.
We will also often use the following direct consequence of \eqref{control-cyl-gmeasures-summable}. For $g\ge0$, $m,n\ge1$ and $a_0^{m-1}\in \A^{m},b_0^{n-1}\in\A^n$, we have
\begin{equation}\label{eq:condi3}
C^{-3} \leq \frac{\mu_\varphi([a_0^{m-1}]\cap \theta^{-m-g}[b_0^{n-1}])}{\mu_\varphi([a_0^{m-1}])\, \mu_\varphi([b_0^{n-1}])}\leq C^3=:D.
\end{equation}
For completeness, the proof is given in an appendix. 

For the topological pressure of $\beta\varphi$ we have the formula
\begin{equation}\label{formula-pressure}
P(\beta\varphi)=\lim_{n} \frac{1}{n}\log \sum_{a_0^{n-1}} \e^{\beta \sup\left\{\sum_{k=0}^{n-1} \varphi(a_k^{n-1}x_n^\infty): x_n^\infty\in \X\right\}}.
\end{equation}
One can easily check that $P(\psi+u-u\circ\theta+c)=P(\psi)+c$ for any continuous potential $\psi$, any continuous $u:\X\to\R$, and any $c\in\R$. 
The map $\beta\mapsto P(\beta\varphi)$ is convex and continuously differentiable with
\[
P'(\beta\varphi)=\int \varphi \dd\mu_{\beta\varphi}.
\]
It is strictly decreasing since $\varphi<0$. Moreover, it is strictly convex if and only if $\mu_\varphi$ is not the measure of maximal 
entropy, that is, the equilibrium state for a potential of the form $u-u\circ \theta-\log |\A|$, where $u:\X\to\R$ is continuous.
We refer to \cite{takens/verbitski/1999} for a proof of these facts.

\subsection{Hitting times, recurrence times, and related \texorpdfstring{$L^q$-spectra}{Lqspectra}}\label{sec:basicsRecTimes}

\paragraph{Hitting and recurrence times.} 
Given $x\in\X$ and $a_0^{n-1}\in \mathcal A^n$, the (first) \emph{hitting time} of $x$ to $[a_0^{n-1}]$ is
\[
T_{a_0^{n-1}}(x)=\inf\big\{k\ge1:x_k^{k+n-1}=a_0^{n-1}\big\}
\]
that is, the first time that the pattern $a_0^{n-1}$ appears in $x$. The (first) \emph{return time} is defined by
\[
R_n(x)=\inf\big\{k\ge1:x_k^{k+n-1}=x_0^{n-1}\big\}\,
\]
that is, the first time that the first $n$ symbols reappear in $x$. Finally, given $x,y\in\X$,  define the \emph{waiting time} 
\[
W_n(x,y):=T_{x_0^{n-1}}(y)
\]
which is the first time the $n$ first symbols of $x$ appear in $y$. 

\paragraph{$L^q$-spectra.} Consider a sequence $(U_n)_{n\ge1}$ of positive measurable functions on some probability space  $(\X,\mathscr{F},\mu)$ where $\mu$ is shift-invariant and define,
for each $q\in\R$ and $n\in\N^*$, the quantities
\begin{equation}\label{eq:orderncumulant}
\EuScript{U}_\mu^{(n)}(q)=\frac{1}{n} \log \int U_n^q(x) \dd\mu(x) \,\,\,(\in \R\cup\{+\infty\}). 
\end{equation}
and 
\[
\overline{\EuScript{U}}_\mu(q) =\limsup_n \EuScript{U}_\mu^{(n)}(q),\quad
\underline{\EuScript{U}}_\mu(q) =\liminf_n \EuScript{U}_\mu^{(n)}(q) \,.
\]
\begin{defi}[$L^q$-spectrum of $(U_n)_{n\ge1}$]\label{def:spectra}
\leavevmode\\
When $\underline{\EuScript{U}}_\mu(q)=\overline{\EuScript{U}}_\mu(q)$ for all $q\in\R$, this defines the $L^q$-spectrum of $(U_n)_{n\ge1}$, 
denoted by $\EuScript{U}_\mu(q)$.
\end{defi}

We will be mainly interested in three sequences of functions, which are, for $n\ge1$
\[
\mu([x_0^{n-1}])^{-1}\,, \,\,R_n(x)\,,\,\,\text{and }\,\,W_n(x,y)\,.
\]
Corresponding to \eqref{eq:orderncumulant}, we naturally associate the functions
\[
\EuScript{M}_\mu^{(n)}\,, \,\,\EuScript{R}_\mu^{(n)}\,, \,\,\text{and }\,\,\EuScript{W}_{\mu\otimes\mu}^{(n)}\,
\]
where for the third one, we mean that we integrate, in \eqref{eq:orderncumulant}, with $\mu\otimes\mu$,  in other words, $x$ and $y$ are drawn independently and according to the same law $\mu$.
Finally, according to Definition \ref{def:spectra}, when the limits exist, we let
\[
\EuScript{M}_\mu(q)\,, \,\,\EuScript{R}_\mu(q)\,, \,\,\text{and }\,\,\EuScript{W}\!_{\mu\otimes\mu}(q)
\]
be the $L^q$-spectrum of the measure, the return-time $L^q$-spectrum, and the waiting time $L^q$-spectrum, respectively. 

The existence of these spectra is not known in general. Trivially, $\EuScript{M}_\mu(0)=\EuScript{R}_\mu(0)=\EuScript{M}_\mu(0)=0$, and  $\EuScript{M}_\mu(1)=\log|\mathcal A |$. 
It is easy to see that $\EuScript{R}_\mu(1)=\log|\mathcal A |$ for ergodic measures (this follows from Ka\v{c}'s Lemma). 

In this  paper, we are interested in the particular case where $\mu=\mu_\varphi$ is an equilibrium state of a potential $\varphi$ of summable variation. In this setting,
it is easy to see (this follows  from \eqref{control-cyl-gmeasures-summable} and \eqref{formula-pressure}) that  $\EuScript{M}_{\varphi}(:= \EuScript{M}_{\mu_\varphi})$ exists, and for all $q\in\R$ we have
\begin{equation}\label{M-and-pressure}
\EuScript{M}_{\varphi}(q)=P((1-q)\varphi).
\end{equation}
On the other hand, as mentioned in introduction, \cite{chazottes/ugalde/2005} proved, in the same setting,
that 
\begin{equation}\label{eq:waiting}
\EuScript{W}_{\varphi}(q):=\EuScript{W}_{\mu_\varphi\otimes\mu_\varphi}(q)
=
\begin{cases}
P((1-q)\varphi) & \text{for}\;\; q\geq -1\\
P(2\varphi) & \text{for}\;\; q< -1\,.
\end{cases}
\end{equation}
It is one of the main objective of the present paper to compute $\EuScript{R}_{\varphi}(q)$ (and in particular show that it exits).

%%%%%%%%%% SECTION
\section{Main results}\label{sec:results}

\subsection{Two preparatory results}

Let us start with two propositions about the critical value of $q$ below which we will prove that the return-time $L^q$-spectrum is different
from the $L^q$-spectrum of $\mu_\varphi$.
\begin{prop}\label{defqstar}
Let $\varphi$ be a potential of summable variation. 
Then, the equation
\begin{equation}\label{eqqstar}
P((1-q)\varphi)=\sup_{\eta\in\mathscr{M}_\theta(\X)} \int \varphi \dd\eta,\quad q\in\R
\end{equation}
has a unique solution $q^*_\varphi\in \left[-1,0\right[$. Moreover, $q_\varphi^*=-1$ if and only if $\varphi=u-u\circ \theta-\log |\A|$ for some continuous function $u:\A^{\N}\to\R$.
\end{prop}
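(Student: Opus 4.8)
\textit{The plan is to reduce \eqref{eqqstar} to the study of the single scalar function $\psi(q):=P((1-q)\varphi)$, $q\in\R$.}

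First I would recall that, by the Gibbs property of the equilibrium state $\mu_\varphi$ and the definition of topological pressure, $\EuScript{M}_{\mu_\varphi}(q)=P((1-q)\varphi)=\psi(q)$ for every $q\in\R$ (the identity already invoked in the Introduction), so \eqref{eqqstar} reads $\psi(q)=m$ with $m:=\sup_\eta\int\varphi\dd\eta$. Since $\varphi$ is normalized, \eqref{varphinormalise} forces $\e^{\varphi(x)}=\e^{\varphi(x_1\cdot\theta x)}<1$ for every $x$, hence $\varphi<0$ on $\X$; as $\varphi$ is continuous and $\X$ compact, $\max\varphi<0$, and therefore $m\le\max\varphi<0$.

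Next I would pin down the shape of $\psi$. By the properties recalled in Section \ref{subsec:setting}, $t\mapsto P(t\varphi)$ is convex and $C^1$ with $P'(t\varphi)=\int\varphi\dd\mu_{t\varphi}$, so $\psi$ is $C^1$ with $\psi'(q)=-\int\varphi\dd\mu_{(1-q)\varphi}\ge-\max\varphi>0$; hence $\psi$ is strictly increasing on $\R$, which already gives \emph{uniqueness} of any solution of $\psi(q)=m$. For existence and localization I would evaluate $\psi$ at two points: $\psi(0)=P(\varphi)=0>m$, while for every shift-invariant $\eta$,
\[
h(\eta)+2\int\varphi\dd\eta=\Big(h(\eta)+\int\varphi\dd\eta\Big)+\int\varphi\dd\eta\le P(\varphi)+m=m,
\]
so taking the supremum over $\eta$ and using the variational principle, $\psi(-1)=P(2\varphi)\le m$. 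By the intermediate value theorem there is $q^*_\varphi\in[-1,0]$ with $\psi(q^*_\varphi)=m$, and $\psi(0)=0\neq m$ excludes $q^*_\varphi=0$; hence $q^*_\varphi\in[-1,0)$.

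It remains to characterize $q^*_\varphi=-1$. By strict monotonicity of $\psi$ this is equivalent to $\psi(-1)=m$, i.e.\ $P(2\varphi)=m$. If $\mu_\varphi$ is the measure of maximal entropy, $\varphi=u-u\circ\theta-\log|\A|$, so $(1-q)\varphi$ is a coboundary plus the constant $-(1-q)\log|\A|$ and $\psi(q)=\log|\A|-(1-q)\log|\A|=q\log|\A|$, while $m=-\log|\A|$; hence $q^*_\varphi=-1$. Conversely, assume $P(2\varphi)=m$. The supremum defining $P(2\varphi)$ is attained at $\mu_{2\varphi}$, so $h(\mu_{2\varphi})+2\int\varphi\dd\mu_{2\varphi}=m$; feeding $\eta=\mu_{2\varphi}$ into the display above, every inequality there becomes an equality, which forces $h(\mu_{2\varphi})+\int\varphi\dd\mu_{2\varphi}=P(\varphi)$ and $\int\varphi\dd\mu_{2\varphi}=m$. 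The first identity says $\mu_{2\varphi}$ is an equilibrium state for $\varphi$, hence $\mu_{2\varphi}=\mu_\varphi$ by uniqueness, and the second then reads $\int\varphi\dd\mu_\varphi=m$. Since $q\mapsto P'(q\varphi)=\int\varphi\dd\mu_{q\varphi}$ is non-decreasing (convexity of $q\mapsto P(q\varphi)$), bounded above by $m$, and equal to $m$ at $q=1$, it is identically $m$ on $[1,+\infty)$; thus $q\mapsto P(q\varphi)$ is affine on $[1,+\infty)$ and in particular not strictly convex on $\R$. By the dichotomy recalled in Section \ref{subsec:setting} (strict convexity of $q\mapsto P(q\varphi)$ holds if and only if $\mu_\varphi$ is not the measure of maximal entropy), $\mu_\varphi$ is the measure of maximal entropy, i.e.\ $\varphi=u-u\circ\theta-\log|\A|$ for a continuous $u$.

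The step I expect to be the main obstacle is this converse implication: extracting the cohomological rigidity ``$\varphi$ is a constant up to a continuous coboundary'' from the scalar identity $P(2\varphi)=\sup_\eta\int\varphi\dd\eta$. The mechanism is to use the variational principle twice — first to split $h(\eta)+2\int\varphi\dd\eta$ so as to recognize that the optimizer $\mu_{2\varphi}$ is also an equilibrium state for $\varphi$, and then to combine $\int\varphi\dd\mu_\varphi=\sup_\eta\int\varphi\dd\eta$ with the monotonicity of $q\mapsto\int\varphi\dd\mu_{q\varphi}$ to flatten $q\mapsto P(q\varphi)$ and invoke the strict-convexity dichotomy. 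Everything else — the reduction to $\psi$, its strict monotonicity, and the localization $q^*_\varphi\in[-1,0)$ — is routine once the Gibbs identity $\EuScript{M}_{\mu_\varphi}(q)=P((1-q)\varphi)$ is in hand.
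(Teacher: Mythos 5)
Your proof is correct, and while its skeleton matches the paper's (the localization $q^*_\varphi\in[-1,0)$ is obtained from exactly the same splitting $h(\eta)+2\int\varphi\dd\eta=(h(\eta)+\int\varphi\dd\eta)+\int\varphi\dd\eta$ giving $P(2\varphi)\le\sup_\eta\int\varphi\dd\eta$, and the forward direction of the characterization is identical), you handle the two nontrivial steps by different means. For existence/uniqueness the paper shows $q\mapsto\EuScript{M}_{\mu_\varphi}(q)$ is a bijection of $\R$, using the asymptotics $P(q\varphi)/q\to\sup_\eta\int\varphi\dd\eta$; your strict monotonicity bound $\psi'(q)\ge-\max\varphi>0$ plus the intermediate value theorem between $q=-1$ and $q=0$ is more economical and avoids the asymptotic input. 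For the converse of the characterization, the paper works with the R\'enyi function: it shows $\EuScript{H}'_{\mu_\varphi}(1)\le 0$ with equality iff $\mu_{2\varphi}=\mu_\varphi$, uses $\EuScript{H}_{\mu_\varphi}(q)\searrow-\gamma^+(\mu_\varphi)$, and then converts $\mu_{2\varphi}=\mu_\varphi$ into $2\varphi=\varphi+u-u\circ\theta+c$ via the rigidity ``equal equilibrium states $\Rightarrow$ cohomologous potentials'', a fact it does not list among the recalled background; you instead squeeze equality out of the variational chain to get $\mu_{2\varphi}=\mu_\varphi$ and $\int\varphi\dd\mu_\varphi=\sup_\eta\int\varphi\dd\eta$, flatten $q\mapsto P(q\varphi)$ on $[1,+\infty)$ by monotonicity of $P'(q\varphi)=\int\varphi\dd\mu_{q\varphi}$, and invoke the strict-convexity dichotomy that Subsection 4.1 does recall explicitly. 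What each buys: your route stays entirely within the stated background facts and needs neither $\EuScript{H}$ nor $\gamma^+$, while the paper's route produces the cohomology equation directly and the derivative information $\EuScript{H}'$ it exploits elsewhere. One small caveat: your final ``i.e.'' (from ``$\mu_\varphi$ is the measure of maximal entropy'' to ``$\varphi=u-u\circ\theta-\log|\A|$'') tacitly uses the same cohomological rigidity the paper invokes; since the paper adopts this identification throughout (see the footnote to Theorem \ref{theo}), this is acceptable, but it is worth being aware that the dichotomy alone, as stated, is about the measure rather than the form of the potential.
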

See Section \ref{sec:proof:ESPSV} for the proof.

The following (non-positive) quantity naturally shows up in the proof of the main theorem. Given a probability measure $\nu$, let
\[ 
\gamma_\nu^+:=\lim_n\frac{1}{n}\log\max_{a_0^{n-1}}\nu([a_0^{n-1}])
\]
{\em whenever the limit exists}. 
As a matter of fact, we have the following variational formula for $\gamma_{\mu_\varphi}^+$. 
\begin{prop}\label{prop-gamma-and-supint}
Let $\varphi$ be a potential of summable variation. Then $\gamma_\varphi^+:=\gamma^+(\mu_\varphi)$ exists and
\begin{equation}\label{gamma-and-supint}
\gamma_\varphi^+=\sup_{\eta\,\in\mathscr{M}_\theta(\X)} \int \varphi \dd\eta\,.
\end{equation}
\end{prop}
The proof is given in Section \ref{sec:proof-prop-gamma-and-supin}.

\subsection{Main results}

We can now state our main results.  

\begin{theo}[Return-time $L^q$-spectrum]\label{theo}
\leavevmode\\
Let $\varphi$ be a potential of summable variation. Assume that $\varphi$ is not of the form $u-u\circ \theta-\log|\A|$ for some continuous function
$u:\A^\N\to\R$ \textup{(}{\em i.e.}, $\mu_\varphi$ is not the measure of maximal entropy\textup{)}. Then
the return-time $L^q$-spectrum $\EuScript{R}_{\varphi}:=\EuScript{R}_{\mu_{\varphi}}$ exists, and we have
\[
\EuScript{R}_{\varphi}(q)=
\begin{cases}
P((1-q)\varphi)& \text{for}\;\; q\geq q_\varphi^*\\
\sup\limits_{\eta\,\in \mathscr{M}_\theta(\X)} \int \varphi \dd\eta & \text{for}\;\; q< q_\varphi^*
\end{cases}
\]
where $q_\varphi^*$ is given in Proposition \ref{defqstar}.
\end{theo}
In view of  \eqref{M-and-pressure} and   \eqref{gamma-and-supint}, the previous formula can be rewritten as:
\[
\EuScript{R}_{\varphi}(q)=\EuScript{M}_{\varphi}(q)\quad\text{for}\quad q\geq q_\varphi^*\quad\text{and}\quad\EuScript{R}_{\varphi}(q)=\gamma_\varphi^+\quad\text{for}\quad q< q_\varphi^*.
\]
In other words, the return-time $L^q$-spectrum coincides with the $L^q$-spectrum of the equilibrium state only for $q\geq q_\varphi^*$.

We deal with the measure of maximal entropy below because for that measure the return-time and the waiting-time spectra coincide.

In view of the  waiting-time $L^q$-spectrum $\EuScript{W}_{\varphi}$, given in \eqref{eq:waiting}, which was computed by  \cite{chazottes/ugalde/2005}, we see that, if $\varphi$ is not of the form $u-u\circ \theta-\log|\A|$,
then $\EuScript{R}_{\varphi}\neq \EuScript{W}_{\varphi}$ in the interval $\left]-\infty,q_\varphi^*\right[\supsetneq \left]-\infty, -1\right[$.
The fact that $P(2\varphi)<\sup_{\eta\in \mathscr{M}_\theta(\X)} \int \varphi\dd\eta$ follows from the proof of Proposition \ref{defqstar}
where we prove that $q_\varphi^*>-1$ in that case. 

Figure \ref{fig:figure} illustrates Theorem \ref{theo}. 

We now consider the case where $\mu_\varphi$ is the measure of maximal entropy.
\begin{theo}[Coincidence of $\EuScript{R}_{\varphi}$ and $\EuScript{W}_{\varphi}$]
\label{theo-suite}
\leavevmode\\
The return-time $L^q$-spectrum coincides with the waiting-time $L^q$-spectrum if and only if $\varphi=u-u\circ \theta-\log|\A|$ for some continuous function $u:\A^\N\to\R$.
In that case we have
\[
\EuScript{W}_{\varphi}(q)=\EuScript{R}_{\varphi}(q)=
\begin{cases}
q\log|\A| & \text{for}\;\; q\geq -1\\
-\log|\A| & \text{for}\;\; q< -1\,.
\end{cases}
\]
\end{theo}

\begin{figure}\label{fig:figure}
\centering
\begin{tikzpicture}[>=stealth]\label{fig:1}
    \begin{axis}
       [
        xmin=-4.1,xmax=4,
        ymin=-2,ymax=2,
        x=1.2cm,
        y=2cm,
        axis x line=middle,
        axis y line=middle,
        axis line style=,
        xlabel={$q$},
        ylabel={},
        yticklabels={,,},
        xticklabels={,,},
        legend entries = {\textcolor{blue}{$\EuScript{R}_{\varphi}(q)$}, \textcolor{brown}{$\EuScript{M}_{\varphi}(q)$},\textcolor{red}{$\EuScript{W}_{\varphi}(q)$}},
        legend style={at={(0.15,.85)},anchor=west}
        ]
\addplot[no marks,blue,thick] expression[domain=-4.1:4,samples=100]{max(ln(2/3),ln(exp(ln(2/3)*(1-x))+exp(ln(1/3)*(1-x)))-0.02};
\addplot[no marks,brown,dotted, thick] expression[domain=-4.1:4,samples=100]{ln(exp(ln(2/3)*(1-x))+exp(ln(1/3)*(1-x)))};
\addplot[no marks,red,dashed,thick] expression[domain=-4.1:4,samples=100]{max(ln(5/9),ln(exp(ln(2/3)*(1-x))+exp(ln(1/3)*(1-x)))+0.02};
                    \addplot[mark=x] coordinates {(-0.6728,0)};
                    \draw [help lines] (axis cs:-0.6728,0) -| (axis cs:-0.6728,-0.4054);
                     \draw[] node[above,scale=0.8] at (axis cs: -0.7,0) {\textcolor{blue}{$q^*$}};
                    \addplot[mark=x] coordinates {(0,-0.4054)};
                    \draw [help lines] (axis cs:-0.6728,-0.4054) -| (axis cs:0,-0.4054);
                    \draw[] node[below,scale=0.8] at (axis cs: 0.8, -0.25) {\textcolor{blue}{$\sup_\eta \int \varphi \dd\eta$}};
                    \addplot[mark=x] coordinates {(0,-0.5877)};
                    \draw [help lines] (axis cs:-1,-0.5877) -| (axis cs:0,-0.4054);
                    \draw[] node[below,scale=0.8] at (axis cs: 0.5, -0.5) {\textcolor{red}{$P(2\varphi)$}};
                    \addplot[mark=x] coordinates {(-1,0)};
                    \draw [help lines] (axis cs:-1,0) -| (axis cs:-1,-0.5877);
                    \draw[] node[above,scale=0.8] at (axis cs: -1.05, 0) {\textcolor{red}{-1}};
    \end{axis}
\end{tikzpicture}
\legend{{\small {\bf Illustration of Theorem \ref{theo}.} Plot of $\EuScript{R}_{\mu}(q)$ when $\mu=m^\N$ (product measure) with $m$ being the Bernoulli distribution (that is $\mathcal A=\{0,1\}$) with parameter $p= 1/3$. This 
corresponds to a potential $\varphi$ which is locally constant on the cylinders $[0]$ and $[1]$, and therefore it obviously fulfils the conditions of the theorem. See Subsection \ref{sec:examples}. For a general potential of 
summable variation which is not of the form $u-u\circ \theta-\log|\A|$, the above graphs have the same shapes.}}
\end{figure}

\subsection{Consequences on large deviation asymptotics}
Let $\varphi$ be a potential of summable variation and assume that it is not of the form $u-u\circ\theta-\log|\A|$ for some continuous function $u$, and let
\[
v^*_\varphi:=-\int \varphi \dd \mu_{(1-q_\varphi^*)\varphi}
\quad\text{and}\quad
v^+_\varphi:=-\inf_\eta \int\varphi \dd\eta.
\]
We define the function $\EuScript{J}_\varphi: \left]v^*_\varphi,v^+_\varphi\right[\to\R_+$ by 
\[
\EuScript{J}_\varphi(v)=vq(v)-\EuScript{R}_{\varphi}(q(v))
\]
where $q(v)$ is the unique real number $q\in \left]q^*_\varphi,+\infty\right[$ such that $\EuScript{R}'_{\varphi}(q)=v$.
It is easy to check that $\EuScript{R}'_{\varphi}\big(]q^*_\varphi,+\infty[\big)=\left]v^*_\varphi,v^+_\varphi\right[$.
(This is because $q\mapsto \EuScript{R}_{\varphi}(q)$ is strictly convex by the assumption we made on $\varphi$, and strictly increasing.)
Notice that since $\EuScript{R}'_{\varphi}(0)=-\int\varphi\dd\mu_\varphi=h(\mu_\varphi)$, we have $h(\mu_\varphi)\in \left]v^*_\varphi,v^+_\varphi\right[$,
and in that interval, $\EuScript{J}_\varphi$ is strictly convex and only vanishes at $v=h(\mu_\varphi)$.

We have the following result. 

\begin{theo}\label{theo:LD}
Let $\varphi$ be a potential of summable variation and assume that it is not of the form $u-u\circ\theta-\log|\A|$ for some continuous function $u$.
Then, for all $v\in \big[h(\mu_\varphi),v^+_\varphi\big[$, we have
\[
\lim_n \frac{1}{n}\log \mu_\varphi\bigg(x:\frac{1}{n}\log R_n(x)>v\bigg)=-\EuScript{J}_\varphi(v).
\]
For all $v\in \big[v^*_\varphi,h(\mu_\varphi)\big[$, we have
\[
\lim_n \frac{1}{n}\log \mu_\varphi\bigg(x:\frac{1}{n}\log R_n(x)<v\bigg)=-\EuScript{J}_\varphi(v).
\]
\end{theo}
\begin{proof}
We apply a theorem from \cite{plachky-steinebach}, a variant of the classical G\"artner-Ellis theorem \cite{DZ} roughly saying that the rate function $\EuScript{J}_\varphi$
is the Legendre transform of the cumulant generating function $\EuScript{R}_{\varphi}$ in the interval where it is continuously differentiable. 
We have that $\EuScript{R}_{\varphi}$ is not differentiable at $q=q_\varphi^*$ since
$\lim_{q\searrow\, q_\varphi^*} \EuScript{R}'_{\varphi}(q)=-\int \varphi \dd \mu_{(1-q_\varphi^*)\varphi}=-v^*_\varphi>0$
and $\lim_{q\nearrow q_\varphi^*} \EuScript{R}'_{\varphi}(q)=0$. Hence we apply the large deviation theorem from \cite{plachky-steinebach}
for $q\in \left]q^*_\varphi,+\infty\right[$ to prove the theorem.
\end{proof}
\begin{remark}
 Theorem \ref{theo:LD} tells nothing about  the asymptotic behaviour of $\mu_\varphi\big(\frac{1}{n}\log R_n < v\big)$
when $v\leq v^*_\varphi$. Notice that the situation is similar for the large deviation rate function of waiting times, the only difference is that we take $-1$ in place of $q^*_\varphi$, and therefore, $-\int \varphi \dd \mu_{2\varphi}$ in place of  $v^*_\varphi$. We believe that there exists a non-trivial rate function describing the large deviation asymptotic for these values of $v$ for both, return and waiting times, but this has to be proven using another method. 
\end{remark}
 
\subsection{Some explicit examples}\label{sec:examples}

\paragraph{Independent random variables.} 

The return-time and hitting-time spectra are non-trivial even when $\mu$ is a product measure, that is, even for a sequence of independent random variables taking values in
$\mathcal{A}$. Take for instance $\mathcal{A}=\{0,1\}$ and let $\mu=m^\N$ where $m$ is a Bernoulli measure on $\mathcal{A}$ with parameter $p_1\neq \frac12$. This corresponds to 
a potential $\varphi$ which is locally constant on the cylinders $[0]$ and $[1]$.
We can identify it with a function from $\mathcal{A}$ to $\R$ such that $\varphi(1)=\log p_1$. For concreteness, let us take $p_1=\frac{1}{3}$. Then it is easy to verify that
\[
\EuScript{M}_\varphi(q)=P((1-q)\varphi)=\log\left(\left(\frac{2}{3}\right)^{1-q}+\left(\frac{1}{3}\right)^{1-q}\right)
\]
and
\[
\EuScript{M}_\varphi(-1)=P(2\varphi)=\log\left( \frac{5}{9}\right)\quad\text{and}\quad \gamma^+(\mu)=\log\left(\frac{2}{3}\right)
\]
whence $P(2\varphi)<\gamma^+_\varphi$, as expected. Numerically solving equation \eqref{eqqstar} gives
\[
q^*_\varphi\approx-0.672814\,.
\]
So in this case Theorem \ref{theo} reads
\[
\EuScript{R}_\varphi(q)=
\begin{cases}
\log\left(\left(\frac{2}{3}\right)^{1-q}+\left(\frac{1}{3}\right)^{1-q}\right)& \text{for}\;\; q\geq q^*_\varphi\\
\log \frac{2}{3} & \text{for}\;\; q< q^*_\varphi.
\end{cases}
\]
We refer to Figure \ref{fig:1} {where this spectrum is plotted, together with $\EuScript{M}_\varphi(q)$ and $\EuScript{W}_\varphi(q)$. 

\begin{remark} 
One can check that, as $p_1\to \frac{1}{2}$, $\EuScript{M}_\varphi(-1)=P(2\varphi)=-\log 2=\lim\limits_{p_1\to\frac{1}{2}}\gamma^+_\varphi$,
and $\lim\limits_{p_1\to\frac{1}{2}} q^*_\varphi=-1$, as expected. 
\end{remark}

\paragraph{Markov chains.}

If a potential $\varphi$ depends only on the first two symbols, that is, $\varphi(x)=\varphi(x_1,x_2)$, then the corresponding process is a Markov chain.
For Markov chains on $\mathcal{A}=\{1,\ldots,K\}$ with matrix $(Q(a,b))_{a,b\in \mathcal{A}}$, a well-known result \cite[for instance]{szpankowski/1993} states that 
\begin{equation}\label{eq:gammaMarkov}
\gamma^+_\varphi=\max_{1\le \ell\le K}\max_{a_1^{\ell}\in \mathcal{C}_{\ell}}\frac{1}{\ell}\log\prod_{i=1}^{\ell}Q(a_{i},a_{i+1})
\end{equation}
where $\mathcal{C}_{\ell}$ is the set of cycles of distinct symbols of $\mathcal{A}$, with the convention that $a_{i+1}=a_i$ (circuits). On the other hand, it is well known \cite{szpankowski/1993} that 
\[
\EuScript{M}_\varphi(q)=\log \lambda_{1-q}
\]
where $\lambda_{\ell}$ is the largest eigenvalue of the matrix $((Q(a,b))^\ell)_{a,b\in\mathcal{A}}$. 
This means that, in principle, everything is explicit for the Markov case. In practice, calculations 
are intractable even with some innocent-looking examples.
Let us restrict to binary Markov chains ($\A=\{0,1\}$) which enjoy reversibility. 
In this case \eqref{eq:gammaMarkov} simplifies to 
\begin{equation}\label{eq:gammaMarkov-BIS}
\gamma_\varphi^+=\max_{i,j\in\A}\frac{1}{2}\log Q(i,j)Q(j,i).
\end{equation}
(See for instance \cite{kamath/verdu/2016}.)
If we further assume symmetry, that is $Q(1,1)=Q(0,0)$, then we obtain
\[
\EuScript{M}_\varphi(q)=\log\left(Q(0,0)^{1-q}+Q(0,1)^{1-q}\right)
\]
and  $\gamma^+_\varphi=\max\{\log Q(0,0),\log Q(0,1)\}$. 
If we want to go beyond the symmetric case, the explicit expression of $\EuScript{M}_\varphi(q)$ gets cumbersome. As an illustration, consider the case $Q(0,0)=0.2$ and $Q(1,1)=0.6$. Then
 \[
\EuScript{M}_\varphi(q)=\log\left( \frac{3^{-q}}{10}\sqrt{8^{-q}(32\cdot 225^q - 12\cdot 600^q + 8^q\cdot(15^q + 3\cdot 5^q)^2)} \right.
\]
\[
\left.+ \frac{3^{-q}}{10}(15^q + 3\cdot 5^q)\right)\,.
\]
From \eqref{eq:gammaMarkov-BIS} we easily obtain $\gamma_\varphi^+=\log(0.6)$.
The solution of equation \eqref{eqqstar} can be found numerically: $q^*_\varphi\approx -0.870750$.

%%%%%%%%%% SECTION: ORDERS OF PERIODICITY%%%%%%%%%% %%%%%%%%%% 
\section{Proofs}\label{sec:proofs}

%%%%%%%%%%%%%%%%SECTION%%%%%%%%%%%%%%
\subsection{Proof of Proposition \ref{defqstar}}\label{sec:proof:ESPSV}
 
Recall that
\[
\mathcal M_\varphi(q)=P((1-q)\varphi)\quad\text{and}\quad  \gamma_\varphi^+=\sup_{\eta\in\mathscr{M}_\theta(\X)} \int \varphi \dd\eta.
\]
It follows easily from the basic properties of $\beta\to P(\beta\varphi)$ listed above
that the map $q\mapsto \mathcal M_\varphi(q)$ is a bijection from $\R$ to $\R$ since it is strictly increasing $C^1$ function.
This implies that the equation $\mathcal M_\varphi(q)=\gamma_\varphi^+$ has a unique solution $q_\varphi^*$ which is necessarily strictly negative,
since $\gamma_\varphi^+<0$ (because $\varphi<0$) and $\mathcal M_\varphi(q)< 0$ if and only if $q< 0$ (since $P(\varphi)=0$).

We now prove that $q_\varphi^*\geq -1$. We use the variational principle \eqref{VP} twice, first for $2\varphi$ and then for $\varphi$ to get
\begin{align*}
\mathcal M_\varphi(-1)
&=P(2\varphi)=h(\mu_{2\varphi})+2\int \varphi \dd\mu_{2\varphi}\\
& = h(\mu_{2\varphi})+\int \varphi \dd\mu_{2\varphi}+\int \varphi \dd\mu_{2\varphi}\\
& \leq P(\varphi) +\int \varphi \dd\mu_{2\varphi}=\int \varphi \dd\mu_{2\varphi}\quad\text{(since}\;P(\varphi)=0\text{)}\\
& \leq\gamma_\varphi^+\,.
\end{align*}
Hence $q_\varphi^*\geq -1$ since $q\mapsto\mathcal M_\varphi(q)$ is increasing. 
Notice that $\mathcal M_\varphi$ is a bijection between $[-1,0]$ and $[P(2\varphi),0]$, and $\gamma_\varphi^+\in [P(2\varphi),0]$.

It remains to analyse the `critical case', that is, $q_\varphi^*=-1$.

If $\varphi=u-u\circ \theta-\log |\A|$ where $u:\A^\N\to\R$ is continuous, then the equation $\mathcal M_\varphi(q)=\gamma_\varphi^+$
boils down to the equation $q\log|\A|=-\log|\A|$, whence $q_\varphi^*=-1$.

We now prove the converse. It is convenient to introduce the auxiliary function
\[
\EuScript{H}(q):= -\frac{\mathcal M_\varphi(-q)}{q}\quad \text{for}\;q\neq 0.
\]
We collect its basic properties in the following lemma whose proof is given at the end of this section.
\begin{lemma}\label{renyi}
The map $\EuScript{H}$ has a continuous extension in $0$ where it takes the value $h(\mu_\varphi)$.
It is $C^1$ and decreasing on $(0,+\infty)$, and $\lim_{q\to+\infty} \EuScript{H}(q)=- \gamma_\varphi^+$.
Moreover, $\EuScript{H}'(1)=h(\mu_{2\varphi})+\int \varphi \dd\mu_{2\varphi}\leq 0$.  
\end{lemma}
The condition $q_\varphi^*=-1$ is equivalent to $\mathcal M_\varphi(-1)=\gamma_\varphi^+$, which in turn
is equivalent to $\EuScript{H}(1)=-\gamma_\varphi^+$. But, since $\EuScript{H}$ decreases to $-\gamma_\varphi^+$,
we must have $\EuScript{H}(q)=-\gamma_\varphi^+$ for all $q\geq 1$, hence the right derivative of $\EuScript{H}$ at $1$ is equal to $0$ but, since
$\EuScript{H}$ is differentiable, this implies that the left derivative of $\EuScript{H}$ at $1$ is also equal to $0$. Hence $\EuScript{H}'(1)=0$. But, by the last statement of the lemma, this means
that $h(\mu_{2\varphi})+\int \varphi \dd\mu_{2\varphi}=0$ which is possible if and only if $\mu_{2\varphi}=\mu_\varphi$, by the variational principle
(since $h(\eta)+\int\varphi\dd\eta=0$ if and only if $\eta=\mu_\varphi$). In turn, this equality holds if and only if there exists a continuous function $u:\A^\N\to\R$ and $c\in\R$ such that
$2\varphi=\varphi + u-u\circ \theta +c$, which is equivalent to
\[
\varphi=u-u\circ \theta+c\,.
\]
Since $P(\varphi)=0$, one must have $c=-\log|\A|$. 

The proof of the proposition is complete.
\begin{comment}
\begin{remark}
One can prove that $\EuScript{H}$ is either strictly convex or constantly equal to $-\gamma_\varphi^+$, because $h(\mu_\varphi)=-\int\varphi \dd\mu_{\varphi}=-\gamma_\varphi^+$
if and only if $\varphi$ takes the form $u-u\circ \theta-\log|\A|$ ($h(\mu_\varphi)-(-\gamma_\varphi^+)$ being the gap between the largest and the lowest values of  $\EuScript{H}$ on $\R_+$).
\end{remark}
\end{comment}

\medskip

\noindent\textbf{Proof of Lemma \ref{renyi}.}
Since
\[
\frac{\dd}{\dd q} P(\varphi+q\varphi)\Big|_{q=0}=\int \varphi \dd\mu_\varphi
\]
we can use l'Hospital rule to conclude that 
\[
-\frac{\mathcal M_\varphi(-q)}{q}\xrightarrow[]{q\to 0} -\int \varphi \dd\mu_\varphi=h(\mu_\varphi)
\]
where we used the variational principle for $\varphi$. Hence we can extend $\EuScript{H}$ at $0$ (and denote the continuous extension by the same symbol).
Then, since the pressure function is $C^1$, we have for $q>0$, and using the variational principle twice, that
\begin{align*}
\nonumber
\EuScript{H}'(q)
&=\frac{1}{q^2} \Big(P((1+q)\varphi)-q\int\varphi \dd\mu_{(1+q)\varphi}\Big)\\
 \nonumber
& = \frac{1}{q^2} \Big(h(\mu_{(1+q)\varphi})+\int\varphi \dd\mu_{(1+q)\varphi}\Big)\\
& \leq \frac{P(\varphi)}{q^2}=0\,.
\end{align*}
Hence $\EuScript{H}$ is $C^1$ and decreases on $(0,+\infty)$. Taking $q=1$ gives the last statement of the lemma.
Finally, let us prove that $\lim_{q\to+\infty} \EuScript{H}(q)=- \gamma_\varphi^+$.
By an obvious change of variable and a change of sign, it is equivalent to prove that
\begin{equation}\label{Psurq}
\lim_{q\to+\infty} \frac{P(q\varphi)}{q} =\gamma_\varphi^+.
\end{equation}
By the variational principle applied to $q\varphi$ we have
\[
P(q\varphi) \geq h(\eta) + q \int \varphi \dd\eta
\]
for any shift-invariant probability measure $\eta$. Hence, for any $q>0$ we get
\[
\frac{P(q\varphi)}{q} \geq \int \varphi \dd\eta +\frac{h(\eta)}{q}
\]
whence
\[
\liminf_{q\to+\infty} \frac{P(q\varphi)}{q} \geq \int \varphi \dd\eta
\]
and taking $\eta$ to be a maximizing measure for $\varphi$ we obtain
\begin{equation}\label{quiche}
\liminf_{q\to+\infty} \frac{P(q\varphi)}{q} \geq \gamma_\varphi^+.
\end{equation}
(By compactness of $\mathscr{M}_\theta(\X)$, there exists at least one shift-invariant measure maximizing $\int \varphi \dd\eta$.)
We now use \eqref{formula-pressure}.
For any $q>0$, we have the trivial bound
\[
\frac{1}{n}\log \sum_{a_0^{n-1}} \e^{q\sup\left\{\sum_{k=0}^{n-1} \varphi(a_k^{n-1}x_n^\infty): x_n^\infty\in \X\right\}}
\leq q\, \frac{1}{n}  {\sup_y  \sum_{k=0}^{n-1}\varphi(y_k^\infty)} + \log|\mathcal{A}|\,.
\]
Hence, by taking the limit $n\to\infty$ on both sides, and using \eqref{pizza} (see the next subsection), we have for any $q>0$
\[
\frac{P(q\varphi)}{q} \leq \gamma_\varphi^++ \frac{\log|\mathcal{A}|}{q}
\]
hence
\[
\limsup_{q\to+\infty}\frac{P(q\varphi)}{q} \leq \gamma_\varphi^+.
\]
Combining this inequality with \eqref{quiche} gives \eqref{Psurq}. The proof of the lemma is complete.

%%%%%%%%%%%%%%%%SECTION%%%%%%%%%%%%%%
\subsection{Proof of Proposition \ref{prop-gamma-and-supint}}\label{sec:proof-prop-gamma-and-supin}

For each $n\geq 1$, let 
\[
\gamma_{\varphi,n}^+=\frac{1}{n} \max_{a_0^{n-1}} \log \mu_\varphi([a_0^{n-1}])\quad\text{and}\quad s_n(\varphi)=\max_y \sum_{k=0}^{n-1}\varphi(y_k^\infty)\,.
\]
(We can put a maximum instead of a supremum in the definition of $s_n(\varphi)$ since by compactness of $\X$ the supremum of the continuous function
$x\mapsto \sum_{k=0}^{n-1}\varphi(x_k^\infty)$ is attained for some $y$.)
Fix $n\geq 1$. We have
\[
s_n(\varphi)
=\max_{a_0^{n-1}}\max_{y:y_0^{n-1}=a_0^{n-1}} \sum_{k=0}^{n-1}\varphi(y_k^\infty)
=\max_{a_0^{n-1}}\max_{y_{n}^\infty} \sum_{k=0}^{n-1}\varphi\big(a_k^{n-1} y_{n}^\infty\big)\,.
\]
Since $\X$ is compact and $\varphi$ is continuous, for each $n$ there exists a point $z^{(n)}\in\X$ such that 
\begin{equation}\label{snvarphi}
s_n(\varphi)=\max_{a_0^{n-1}}\sum_{k=0}^{n-1}\varphi\big(a_k^{n-1} (z^{(n)})_{n}^\infty\big)\,.
\end{equation}
Now using \eqref{control-cyl-gmeasures-summable} we get
\begin{equation}\label{pouic}
\left| \gamma_{\varphi,n}^+- \frac{1}{n}\max_{a_0^{n-1}}\sum_{k=0}^{n-1} \varphi\big(a_{k}^{n-1} x_{n}^\infty\big)\right| \leq \frac{C}{n}
\end{equation}
for any choice of $x_{n}^\infty\in\X$, so we can take $x_{n}^\infty=(z^{(n)})_{n}^\infty$. By using \eqref{pouic} and \eqref{snvarphi} we thus obtain
\[
\left| \gamma_{\varphi,n}^+- \frac{s_n(\varphi)}{n}\right| \leq \frac{C}{n},\; n\geq 1\,.
\]
Now, one can check that $(s_n(\varphi))_n$ is a subadditive sequence such that
$\inf_m  m^{-1} s_m(\varphi) \geq -\|\varphi\|_\infty$. Hence, by Fekete's lemma (see
{\em e.g.} \cite{Sszpankowski-book}) $\lim_n n^{-1}s_n(\varphi)$ exists,
so the limit of $\left(\gamma_{\varphi,n}^+\right)_{n\ge1}$ also exists and coincides with $\lim_n n^{-1}s_n(\varphi)$. 
We now use the fact that 
\begin{equation}\label{pizza}
\lim_n \frac{s_n(\varphi)}{n}=\sup_{\eta\in\mathscr{M}_\theta(\X)} \int \varphi \dd \eta\,.
\end{equation}
The proof is found in \cite[Proposition 2.1]{jenkinson-survey-2006}. This finishes the proof of Proposition \ref{prop-gamma-and-supint}.

\subsection{Auxiliary results concerning recurrence times}\label{sec:periodicity}

In this section we state some auxiliary results which will be used in the proofs of the main theorems, and are concerned with recurrence times. 

\subsubsection{Exponential approximation of return-time distribution}\label{subsec:approx_expo}

The following result of \cite{abadi/amorim/gallo/2020}  will be important in the proof of Theorem \ref{theo} for $q>0$. 

We recall that a measure $\mu$ enjoys the $\psi$-mixing property if
there exists a sequence $(\psi(\ell))_{\ell\geq 1}$ of positive numbers decreasing to zero where
\[
\psi(\ell):=\sup_{j\geq 1}\sup_{B\in \mathscr{F}_0^j,\, B'\in \mathscr{F}_{j+\ell}^\infty}
\left| \frac{\mu(B\cap B')}{\mu(B)\mu(B')}-1\right|.
\]
\begin{theo}[Exponential approximation under $\psi$-mixing]\label{USthm}
\leavevmode\\
Let $(X_k)_{k\geq 0}$ be a process distributed according to a $\psi$-mixing measure $\mu$.
There exist constants $C,C'>0$ such that, for any $x\in\X$, $n\geq 1$ and  $t\ge\tau(x_0^{n-1})$, we have
\begin{equation*} 
\left|\,\mu_{x_0^{n-1}}\left(T_{x_0^{n-1}}> t\right)-\zeta_\mu(x_0^{n-1})\e^{-\zeta_\mu(x_0^{n-1})\mu([x_0^{n-1}])(t-\tau(x_0^{n-1}))}\right|
\end{equation*}
\begin{equation}\label{US}
\le 
\begin{cases}
C\epsilon_n & \text{if}\;\; t\le \frac{1}{2\mu([x_0^{n-1}])}\\
C\epsilon_n\mu([x_0^{n-1}])\,t\e^{-(\zeta_\mu(x_0^{n-1})-C'\!\epsilon_n)\mu([x_0^{n-1}])t} & \text{if}\;\; t> \frac{1}{2\mu([x_0^{n-1}])}
\end{cases}
\end{equation}
where $(\epsilon_n)_n$ is a sequence of positive real numbers converging to $0$, and where  $\tau(x_0^{n-1})$ and  $\zeta_\mu(x_0^{n-1})$ are defined in \eqref{letaux}
and  \eqref{lezeta}, respectively.
\end{theo}
In \cite{abadi/amorim/gallo/2020}, this is Theorem 1, statement 2, combined with Remark 2.
A consequence of $\psi$-mixing is that there exist $c_1,c_2>0$ such that $\mu([x_0^{n-1}])\leq c_1 \e^{-c_2 n}$ for all $x$ and $n$. This also follows from \eqref{control-cyl-gmeasures-summable} since $\varphi<0$.
\begin{remark}
Notice that a previous version of the present paper relied on an exponential approximation of the return-time distribution given in \cite{abadi/vergne/2009}, but their error term turned out to be wrong 
for $t\le \frac{1}{2\mu([x_0^{n-1}])}$. This mistake was fixed in \cite{abadi/amorim/gallo/2020}. 
\end{remark}

Equilibrium states with potentials of summable variation are $\psi$-mixing.
\begin{prop}\label{prop:eq_states=psi}
Let $\varphi$ be a potential of summable variation. Then its equilibrium state $\mu_\varphi$ is $\psi$-mixing.
\end{prop}
\begin{proof}
The proof follows easily from \eqref{eq:condi3}, for $i=0$.
First notice that this double inequality obviously holds for any $F\in\mathscr F_0^{m-1}$ in place of $a_0^{m-1}\in \A^{m}$. Moreover, by the monotone class 
theorem, it also holds for any $G\in\mathscr F$ in place of $b_0^{n-1}\in\A^n$, and we obtain that: for any $n\ge1,F\in\mathscr F_0^{n-1}, G\in\mathscr F$
\begin{equation}\label{eq:condi_better}
C^{-3} \leq \frac{\mu_\varphi(F\cap \theta^{-m}G)}{\mu_\varphi(F)\,\mu_\varphi(G)}\leq C^3.
\end{equation}
We now apply Theorem 4.1(2) in \cite{bradleysurvey} to conclude the proof.
\end{proof}

\begin{remark}
Let us mention that, although the $\psi$-mixing property, \emph{per se}, is not studied in \cite{walters1975ruelle}, it is a consequence of  what is actually proved in the proof of Theorem 3.2 therein.
%Notice that in the definition of $\psi$-mixing above, the events $B'$ belong to the $\sigma$-algebra generated by infinitely many coordinates, whereas in \cite{walters1975ruelle} this property
%is proved for $B'$ depending on finitely many coordinates. But both definitions coincide by the monotone class theorem.
%
\end{remark}

%Equilibrium states of potentials of summable variation are $\psi$-mixing.
%\begin{prop}\label{prop:eq_states=psi}
%Let $\varphi$ be a potential of summable variation. Then its equilibrium state $\mu_\varphi$ is $\psi$-mixing.
%\end{prop}
%\begin{proof}
%Although the $\psi$-mixing property, \emph{per se}, is not studied in \cite{walters1975ruelle}, it \tcr{is} what is actually proved in the proof of Theorem 3.2 therein. 
%(To be precise, one has to use the monotone class theorem.)
%\end{proof}

\subsubsection{First possible return time and potential well}\label{sec:periodicity_asymptotic}

For the proof of the main theorem in the case $q<0$, we will need to consider the short recurrence properties of the measures. The smallest possible return time in a cylinder $[a_0^{n-1}]$, also called its period, will have a particularly important role, it is defined by
\begin{equation}\label{letaux}
\tau(a_0^{n-1})=\inf_{x\in [a_0^{n-1}]} T_{a_0^{n-1}}(x)\,.
\end{equation}
One can check that $\tau(a_0^{n-1})=\inf\{k\geq 1: [a_0^{n-1}]\cap \theta^{-k}[a_0^{n-1}]\neq \emptyset\}$. Observe that $\tau(a_0^{n-1})\leq n$, for all $n\geq 1$.
 
Let $\mu$ be a probability measure, assume it has \emph{complete grammar}, that is, it gives a positive measure to all cylinders.
We denote by $\mu_{a_0^{n-1}}(\cdot):=\mu([a_0^{n-1}]\cap \cdot)/\mu([a_0^{n-1}])$ the measure conditioned on $[a_0^{n-1}]$.
For any  $a_0^{n-1}\in \mathcal{A}^n$, define
\begin{align}\label{lezeta}
\zeta_\mu(a_0^{n-1})
& := \mu_{a_0^{n-1}}\big( T_{a_0^{n-1}}\neq \tau(a_0^{n-1})\big)\\
\nonumber
& =\mu_{a_0^{n-1}}\big( T_{a_0^{n-1}}> \tau(a_0^{n-1})\big).
\end{align}
This quantity was called \emph{potential well} in \cite{abadi/cardeno/gallo/2015} and \cite{abadi/amorim/gallo/2020}, and shows up as an additional scaling factor in exponential approximations of the distributions of hitting and return times (see next subsection for instance). 
\begin{remark}\label{rem:no_return_early}
For $t<\mu([a_0^{n-1}])\,\tau(a_0^{n-1})$ we have 
\[
\mu_{a_0^{n-1}}\left(T_{a_0^{n-1}}\le \frac{t}{\mu([a_0^{n-1}])}\right)=0
\]
since by definition $\mu_{a_0^{n-1}}\big(T_{a_0^{n-1}}< \tau(a_0^{n-1}) \big)=0$ \textup{(}whence the rightmost equality in
\eqref{lezeta}\textup{)}.
\end{remark}

As already mentioned, equilibrium states with potential of summable variation are $\psi$-mixing (see Proposition \ref{prop:eq_states=psi}). Since moreover, they have complete grammar, therefore they satisfy the conditions of Theorem 2 of \cite{abadi/amorim/gallo/2020}. This result states that the potential well is bounded away from $0$: 
\begin{equation}\label{zetagmesure}
\zeta_\varphi^-:=\inf_{n\geq 1}\inf_{a_0^{n-1}} \zeta_{\varphi}(a_0^{n-1})>0
\end{equation}
in which $\zeta_{\varphi}:=\zeta_{\mu_\varphi}$. 

We conclude this subsection with the following proposition which plays an important role in the proof of our main result. Its proof is quite long, and for this reason, it is postponed to Section \ref{sec:proofs_1-zeta}.

\begin{prop}\label{prop:essential}
Let $\mu_\varphi$ be the equilibrium state of a potential $\varphi$ of summable variation.
Then 
\[
\Lambda_\varphi:=\lim_n\frac{1}{n}\log\sum_{a_0^{n-1}} (1-\zeta_{\varphi}(a_0^{n-1}))\, \mu_\varphi([a_0^{n-1}])=\gamma_\varphi^+.
\]
\end{prop}

\subsection{Proof of Theorems \ref{theo} and \ref{theo-suite} for \texorpdfstring{$q\ge0$}{qgeqzero}}%\label{app:qge0}

\begin{notation}\label{notation}
We will write $\sum_{A\in\A^n}$ for $\sum_{a_0^{n-1}\in\A^n}$ and $\mu_\varphi(A)$ for $\mu_\varphi([a_0^{n-1}])$. 
We will also use the notation $\mu_{\varphi,A}(\cdot)=\mu_\varphi(A\cap \cdot)/\mu_\varphi(A)$.
\end{notation}

For the case of $q\ge 0$, we proceed as in \cite{chazottes/ugalde/2005}, but we give the proof for completeness.
The case $q=0$ is trivial. For any $q>0$ we have by a classical formula and a trivial change of variable
\begin{align*}
\int R_n^{q}\dd\mu_\varphi
& \!=\!\sum_{A\in\A^n}\!\mu_\varphi(A)\!\! \int T_{A}^{q} \dd\mu_{\varphi,A}\!=\! \sum_{A\in\A^n}\!\mu_\varphi(A)\int_{1}^\infty\! \mu_{\varphi,A}\big(T_{A}^{q}> s\big)\! \dd s\\
& \!=q\sum_{A\in\A^n} \mu_\varphi(A)\int^\infty_{\tau(A)} t^{q-1} \mu_{\varphi,A}\left(T_{A}> t\right) \dd t.
\end{align*}
We took into account that $\mu_{\varphi,A}(T_A\leq t)=0$ for $t<\tau(A)$.
Theorem \ref{theo} will be proved for $q>0$ if we prove that the above integral {is of the order $C\mu_\varphi(A)^{-q}$ for any $A$}. 
We use the exponential approximation \eqref{US} of Theorem \ref{USthm}, and the following facts:
\begin{itemize}
\item By  \eqref{zetagmesure}, we have $\inf_A \zeta_\varphi(A)\ge \zeta_\varphi^->0$,
and by definition $\zeta_\varphi(A)\le 1$ for all $A$. 
\item Consequently, there exists a constant $\varrho>0$ such that for all $n$ large enough,
$\varrho\leq \inf_A \zeta_\varphi(A)-C'\epsilon_n\leq 1/2$. 
\item For all $n$ large enough,  we have $\sup_A\big(\zeta_\varphi(A)\mu_\varphi(A)\tau(A)\big)\le 1$ since $\zeta_\varphi(A)\leq 1$, $\tau(A)\leq n$ and $\mu_\varphi(A)$ decays
exponentially fast to $0$ with a rate independent of $A$.
\end{itemize}
By \eqref{US} we thus have the following upper bound: there exists $n_0$ such that for all $n\geq n_0$ and for all $A$
\[
\mu_{\varphi,A}\left(T_A> t\right)\leq 3 \e^{-\zeta_\varphi^-\mu_\varphi(A)t}
+
\begin{cases}
C\epsilon_n & \text{if}\;\; t\le \frac{1}{2\mu_\varphi(A)}\\
C\epsilon_n\,\mu_\varphi(A)\,t\e^{-\varrho\mu_\varphi(A)t} & \text{if}\;\; t> \frac{1}{2\mu_\varphi(A)}.
\end{cases}
\]
Hence we obtain (after an obvious change of variable)
\begin{align*}
& \int^\infty_{\tau(A)} t^{q-1} \mu_{\varphi,A}\left(T_{A}> t\right) \dd t \leq
3\mu_\varphi(A)^{-q} \int_{\tau(A)\mu_\varphi(A)}^\infty s^{q-1} \e^{-\zeta^- s} \dd s \\
& \qquad \qquad+ C\epsilon_n \mu_\varphi(A)^{-q}\left[ \int_{\tau(A)\mu_\varphi(A)}^{\frac{1}{2}} s^{q-1} \dd s  + \int_{\frac{1}{2}}^\infty s^q \e^{-\varrho s} \dd s\right].
\end{align*}
The right-hand side increases if we replace $\tau(A)\mu_\varphi(A)$ by $0$ in the first two integrals. It follows at once that there is a constant $\tilde{C}(q)>0$ such that for
all $n$ larger than some $\tilde{n}_0$ and for all $A$, we have
\[
 \int^\infty_{\tau(A)} t^{q-1} \mu_{\varphi,A}\left(T_{A}> t\right) \dd t \leq \tilde{C}(q) \mu_\varphi(A)^{-q}.
\] 
Hence
\[
\int R_n^q \dd\mu_\varphi\leq q\,\tilde{C}(q) \sum_A \mu_\varphi(A)^{1-q}
\]
and therefore, using Proposition \ref{M-and-pressure} we get 
\[\overline{\EuScript{R}}_\varphi(q)=
\limsup_n \frac{1}{n}\log \int R_n^q \dd\mu_\varphi\leq P((1-q)\varphi).
\]
Now by \eqref{US} we have the following lower bound: for all $n\geq n_0$ and for all $A$
\[
\mu_{\varphi,A}\left(T_A> t\right)\geq \zeta_- \e^{-\mu_\varphi(A)t}
-
\begin{cases}
C\epsilon_n & \text{if}\quad t\le \frac{1}{2\mu_\varphi(A)}\\
C\epsilon_n\,\mu_\varphi(A)\,t\, \e^{-\mu_\varphi(A)t/2} & \text{if}\quad t> \frac{1}{2\mu_\varphi(A)}.
\end{cases}
\]
It is left to the reader to check that there exists a constant $\widehat{C}(q)>0$ such that for $n$ larger than some $\hat{n}_0$ we have
\[
\int R_n^q \dd\mu_\varphi\geq q\,\widehat{C}(q) \sum_A \mu_\varphi(A)^{1-q}
\]
and therefore, using Proposition \ref{M-and-pressure} we get 
\[
\underline{\EuScript{R}}_\varphi(q)=\liminf_n \frac{1}{n}\log \int R_n^q \dd\mu_\varphi\geq P((1-q)\varphi).
\]
We thus proved that $\EuScript{R}_\varphi$ exists for all $q\geq 0$, and
\[
\EuScript{R}_\varphi(q)=\lim_n \frac{1}{n}\log \int R_n^q \dd\mu_\varphi= P((1-q)\varphi).
\]
This proves both Theorems \ref{theo} and \ref{theo-suite} in this regime.  When $\varphi=u-u\circ \theta-\log|\A|$ for some continuous function $u:\A^\N\to\R$, 
we have $P((1-q)\varphi)=q\log|\A|$, and this is the only case when this function is not strictly convex.

%%%%%%%%%%%%%%%%%%%%%% SECTION %%%%%%%%%%%%%%%%%%%%%%%%%%%%%

\subsection{Proofs of Theorems \ref{theo} and \ref{theo-suite} for \texorpdfstring{$q<0$}{qleqzero}}%\label{sec:prooftheo}

{We continue using Notation \ref{notation}. }

Proceeding as above, we have for any $q<0$
\begin{align}
\nonumber
& \int R_n^{-|q|}\dd\mu_\varphi =\\
& |q|\!\sum_{A\in\A^n}\mu_\varphi(A)^{|q|+1}\int^\infty_{\mu_\varphi(A){\tau(A)}}t^{-|q|-1} \mu_{\varphi,A}\left(T_{A}\le \frac{t}{\mu_\varphi(A)}\right) \dd t
\label{eq:arrive}
\end{align}
where we integrate from $\mu_\varphi(A){\tau(A)}$ since (see Remark \ref{rem:no_return_early})
\begin{equation*}
\mu_{\varphi,A}\left(T_{A}\le \frac{t}{\mu_\varphi(A)}\right)=0\,\,\,\,\text{for}\,\,\, t<{\tau(A)}{\mu_\varphi(A)}.
\end{equation*}
We therefore want to estimate the integral 
\begin{equation}
\label{Iq}
I(q,\![\mu_\varphi(A)\tau(A),\infty])\!:=\!\int^\infty_{\mu_\varphi(A)\tau(A)}\! t^{-|q|-1} \mu_{\varphi,A}\!\!\left(T_{A}\le \frac{t}{\mu_\varphi(A)}\right)\!\! \dd t.
\end{equation}
Since $t^{-|q|-1}$ diverges close to $0$, we see that we need a sufficiently precise control of $\mu_{\varphi,A}\!\left(T_{A}\le \frac{t}{\mu_\varphi(A)}\right)$ for `small' $t$'s. This will be done `by hands', using the results of Subsection \ref{sec:periodicity_asymptotic} instead of  Theorem \ref{USthm}.

\subsubsection{Bounding \texorpdfstring{$\mu_{\varphi,A}\left(T_{A}\le \frac{t}{\mu_\varphi(A)}\right)$}{muvarphi}}

We first consider the case $t\in\left[\mu_\varphi(A)\tau(A),2\right[$ and then the case $t\geq 2$ to control the integral \eqref{Iq}. (Since we will take the limit $n\to\infty$, we implicitly assume that $n$ is large enough
so that $\mu_\varphi(A)\tau(A)$ is smaller that $2$.)

For $t\in\left[\mu_\varphi(A)\tau(A),2\right[$, we first observe that
\[
\mu_{\varphi,A}\left(T_{A}\le \frac{t}{\mu_\varphi(A)}\right)\ge \mu_{\varphi,A}\big( T_{A}= \tau(A)\big). 
\]
On the other hand, for any such $t$ we have
\begin{align}
\nonumber
& \mu_{\varphi,A}\left(T_{A}\le \frac{t}{\mu_\varphi(A)}\right)\\
& =\mu_{\varphi,A}\left(T_{A}\le n-1\right)+\mu_{\varphi,A}\left(n\le T_{A}\le \frac{t}{\mu_\varphi(A)}\right).
\label{eq:avant2_up}
\end{align}
We want to get the upper bound \ref{eq:<n} (see below) for the first term of the right-hand side of \eqref{eq:avant2_up}. To get this upper bound, first suppose that $\tau(A)=n$, then in this case $\mu_{\varphi,A}\left(T_{A}\le 
n-1\right)=0$ and the inequality is obvious. Thus, we now suppose that $\tau(A)\le n-1$. Since $\mu_{\varphi,A}\left(T_{A}<\tau(A)\right)=0$ and since for any $\tau(A)\le i\le n-1$ (remember that $A=a_0^{n-1}$), there is a constant $D\geq 1$ such that
\begin{align}
\nonumber
\mu_{\varphi,A}(T_{A}= i) 
& \le  D\,\mu_\varphi\big(\big[a_{n-i}^{n-1}\big]\big)\!\le\! D\,\mu_\varphi\big(\big[a_{n-\tau(a_0^{n-1})}^{n-1}\big]\big)\\
&\le D^2\!\mu_{\varphi,A}(T_{A}= \tau(A)).
\label{pomme}
\end{align}
The second inequality is trivial since $a_{n-\tau(a_0^{n-1})}^{n-1}$  is a substring of $a_{n-i}^{n-1}$.
The other two inequalities use \eqref{eq:condi3} for $g=0$.
We deduce from \eqref{pomme} that \eqref{eq:<n}
\begin{equation}\label{eq:<n}
\mu_{\varphi,A}\left(T_{A}\le n-1\right)\le n D^2\mu_{\varphi,A}\big( T_{A}=\tau(A)\big).
\end{equation}

We now want an upper bound for the second term in the right-hand side of \eqref{eq:avant2_up}. Using \eqref{eq:condi3} for $g=0$ we get 
\begin{align*}
& \mu_{\varphi,A}\left(n\le T_A\le \frac{t}{\mu_\varphi(A)}\right)
=\mu_{\varphi,A}\left(\bigcup_{i=n}^{\left\lfloor\frac{t}{\mu_\varphi(A)}\right\rfloor}\{T_A=i\}\right)\\
&\le  \mu_{\varphi,A}\left(\bigcup_{i=n}^{\left\lfloor\frac{t}{\mu_\varphi(A)}\right\rfloor}\{X_i^{i+n-1}=A\}\right)
\le  D\mu_\varphi\left(\bigcup_{i=n}^{\left\lfloor\frac{t}{\mu_\varphi(A)}\right\rfloor}\{X_i^{i+n-1}=A\}\right)\\
&\le  D\sum_{i=n}^{\left\lfloor\frac{t}{\mu_\varphi(A)}\right\rfloor}\mu_\varphi\left(\{X_i^{i+n-1}=A\}\right)
\le  Dt.
\end{align*}
Therefore, for any $t\in\left[\mu_\varphi(A)\tau(A),2\right[$, we have
\begin{align}
\nonumber
\mu_{\varphi,A}\!\big( T_{A}= \tau(A)\big)
& \leq  \mu_{\varphi,A}\left(T_{A}\le \frac{t}{\mu_\varphi(A)}\right)\\
&  \leq nD^2\mu_{\varphi,A}\!\big( T_{A}= \tau(A)\big)+Dt.
\label{eq:bounds_<2}
\end{align}

For $t\geq 2$ we have
\begin{align}
\label{eq:upper>2}
1\ge\mu_{\varphi,A}\left(T_{A}\le \frac{t}{\mu_\varphi(A)}\right)&=1-\mu_{\varphi,A}\left(T_{A}> \frac{t}{\mu_\varphi(A)}\right)\\
\nonumber&\ge 1-\frac{\mathbb E_A(T_A)}{t/\mu_\varphi(A)}\\
\label{eq:lower>2}
&=1-\frac{1}{t}\ge\frac{1}2
\end{align}
where we used Markov's inequality and then Ka\v{c}'s Lemma (which holds since $\mu$ is ergodic).

\subsubsection{{Integral estimates}}

Using the bounds for $\mu_{\varphi,A}\left(T_{A}\le \frac{t}{\mu_\varphi(A)}\right)$ we obtained in the preceding subsection, we can now bound the integral $I(q,[\tau(A)\mu_\varphi(A),\infty])$ from above and from below. 

\noindent {\bf Lower bound for any $q<0$.} Using \eqref{eq:bounds_<2} and \eqref{eq:lower>2} we get
\begin{align*}
& I(q,[\mu_\varphi(A)\tau(A),\infty])\ge \\
& \frac{1}{|q|}\left(\mu_{\varphi,A}\big( T_{A}= \tau(A)\big)\left[(\mu_\varphi(A)\tau(A))^{-|q|}-2^{-|q|}\right]+2^{-|q|-1}\right).
\end{align*}
We can choose a suitable constant $c(q)>0$ ensuring that for any sufficiently large $n$'s we have $(\mu_\varphi(A)\tau(A))^{-|q|}-2^{-|q|}\ge c(q)(\mu_\varphi(A)\tau(A))^{-|q|}$ which is itself bounded below by $c(q)(\mu_\varphi(A)n)^{-|q|}$
since $\tau(A)\leq n$. This gives for all $q<0$
\begin{align}
\nonumber
& I(q,[\mu_\varphi(A)\tau(A),\infty])\ge\\
&  \frac{1}{|q|}\left(c(q)\mu_{\varphi,A}\big( T_{A}= \tau(A)\big)(\mu_\varphi(A)n)^{-|q|}+2^{-|q|-1}\right).
\label{samedi}
\end{align}

\noindent {\bf Upper bounds.} Using  the upper bounds of \eqref{eq:bounds_<2} and \eqref{eq:upper>2}, we have 
\begin{align*}
& I(q,[\mu_\varphi(A)\tau(A),\infty]) \le\\
& \int_{\mu_\varphi(A)\tau(A)}^2 t^{-|q|-1} \left(nD^2\mu_{\varphi,A}\big( T_{A}= \tau(A)\big)+Dt\right)\! \dd t+\int^\infty_{2} t^{-|q|-1}\dd t.
\end{align*}

We have to consider three cases according to the values of $q$. 
\begin{itemize}
\item Assume first that $q<-1$. Then
\begin{align*}
& I(q,[\mu_\varphi(A)\tau(A),\infty])\le  \frac{1}{|q|}\!\left(nD^2\mu_{\varphi,A}\big( T_{A}= \tau(A)\big)\!\left[\mu_\varphi(A)\tau(A)\right]^{-|q|} \right.\\
&\qquad \qquad \qquad \qquad \qquad+ \left. \frac{D|q|}{|q|-1}(\mu_\varphi(A)\tau(A))^{-|q|+1}+2^{-|q|}\right).
\end{align*}
We can take a suitable constant $C(q)>0$ ensuring that for any sufficiently large $n$ we have
\[
\frac{D|q|}{|q|-1}(\mu_\varphi(A)\tau(A))^{-|q|+1}+2^{-|q|}\le C(q)(\mu_\varphi(A)\tau(A))^{-|q|+1}.
\]
Now using that $1\le \tau(A)$, we get
\begin{align}
\nonumber
& I(q,[\mu_\varphi(A)\tau(A),\infty])\\
& \le\!\frac{1}{|q|}\!\!\left(\!nD^2\mu_{\varphi,A}\big( T_{A}\!=\! \tau(A)\big)\mu_\varphi(A)^{-|q|}\!+\!C(q)\mu_\varphi(A)^{-|q|+1}\!\right)\!.
\label{poire1}
\end{align}
\item For $q\in(-1,0)$, putting $C'(q):=\frac{D|q|}{|q|-1}2^{-|q|+1}+2^{-|q|}$, we have
\begin{align}
\nonumber
& I(q,[\mu_\varphi(A)\tau(A),\infty])\\
& \le  \frac{1}{|q|}\!\left(nD^2\!\mu_{\varphi,A}\big( T_{A}= \tau(A)\big)\mu_\varphi(A)^{-|q|}\!+\!C'(q)\right).
\label{poire2}
\end{align}
\item We conclude with the case $q=-1$. Integrating, we get
\begin{align*}
& I(-1,[\mu_\varphi(A)\tau(A),\infty])\\
& \le  nD^2\mu_{\varphi,A}\big( T_{A}= \tau(A)\big)\mu_\varphi(A)^{-1}+D\log\frac{2}{\mu_\varphi(A)\tau(A)}+\frac{1}{2}\\
& \le nD^2\mu_{\varphi,A}\big( T_{A}= \tau(A)\big)\mu_\varphi(A)^{-1}+D\log\frac{2}{\mu_\varphi(A)}+\frac{1}{2}.
\end{align*}
Now, since $\mu_\varphi(A)\ge C^{-1}\e^{-\|\varphi\|_\infty n}$ by \eqref{control-cyl-gmeasures-summable} (where $C\geq 1$ is independent of $A$ and $n$), we get for all $n$ large enough
\begin{align}
\nonumber
& I(-1,[\mu_\varphi(A)\tau(A),\infty])\\
& \le  nD^2\mu_{\varphi,A}\big( T_{A}= \tau(A)\big)\mu_\varphi(A)^{-1}+2Dn\|\varphi\|_\infty.
\label{poire3}
\end{align}
\end{itemize}

\subsubsection{Conclusion of the proofs}%\label{sec:proof_conclusion}

Let $(a_n), (b_n)$ two sequences of positive real numbers. The following notion of asymptotic equivalence is convenient in the sequel:
\[
a_n\asymp b_n\quad\text{means}\quad\lim_n\frac{1}{n}\log a_n=\lim_n \frac{1}{n}\log b_n.
\]
We now list the properties we are going to use to conclude the proofs.
By \eqref{M-and-pressure} we have for all $q\in\R_-$
\begin{equation}
\label{asympM}
\sum_{A\in\A^n}\mu_\varphi(A)^{1+|q|}\asymp \e^{n\EuScript{M}_{\varphi}(q)}\quad\text{and}\quad \EuScript{M}_{\varphi}(q)=P((1-q)\varphi).
\end{equation}
By Proposition \ref{prop:essential} we have
\begin{equation}
\label{asympLambda}
\sum_{A\in \A^n}\mu_{\varphi,A}(T_A=\tau(A))\mu_\varphi(A) \asymp \e^{n\Lambda_\varphi}
\quad\text{and}\quad \Lambda_\varphi=\gamma_\varphi^+
\end{equation}
since $1-\zeta_\varphi(A)=\mu_{\varphi,A}(T_A=\tau(A))$ (see \eqref{lezeta}). By Proposition \ref{defqstar}, the unique solution of the equation $\EuScript{M}_{\varphi}
(q)=\Lambda_\varphi$ 
is $q_\varphi^*\in \left[-1,0\right[$. Finally, we also have to remember that $q\mapsto \EuScript{M}_{\varphi}(q)$ is strictly increasing.

Up to prefactors that are negligible in the sense of $\asymp$, the proofs will boil down to compare $\EuScript{M}_{\varphi}(q)$ with $\Lambda_\varphi$, when $q$ runs through 
$\R_-$, to see which one of the two `wins' on the logarithmic scale.

We first prove that $\underline{\EuScript{R}}_\varphi(q)\geq \Lambda_\varphi$ for $q\leq q_\varphi^*$, and $\underline{\EuScript{R}}_\varphi(q)\geq \EuScript{M}_{\varphi}(q)$ 
for $q>q_\varphi^*$.
By \eqref{eq:arrive}, \eqref{Iq} and \eqref{samedi} we have for all $q<0$, and for all $n$ large enough
\[
\int R_n^{-|q|}\dd\mu_\varphi \!\geq 
\frac{c(q)}{n^{|q|}}\!\left(\sum_{A\in\A^n}\mu_{\varphi,A}(T_A=\tau(A))\mu_\varphi(A)+\!\sum_{A\in\A^n}\mu_\varphi(A)^{1+|q|}\!\right)\!\!.
\]
If $q>q_\varphi^*$, $\EuScript{M}_{\varphi}(q)>\Lambda_\varphi$, hence by \eqref{asympM} and \eqref{asympLambda}, we get $\underline{\EuScript{R}}_\varphi(q)\geq \EuScript{M}_{\varphi}(q)$.
If $q\leq q_\varphi^*$, $\EuScript{M}_{\varphi}(q)\leq \Lambda_\varphi$, hence by \eqref{asympM} and \eqref{asympLambda}, we get $\underline{\EuScript{R}}_\varphi(q)\geq \Lambda_{\varphi}$.

We now prove that $\overline{\EuScript{R}}_\varphi(q)\leq \Lambda_\varphi$ for $q\leq q_\varphi^*$, and $\overline{\EuScript{R}}_\varphi(q)\leq \EuScript{M}_{\varphi}(q)$ for $q>q_\varphi^*$.

We first consider the case where $\varphi$ is not of the form $u-u\circ \theta-\log|\A|$ for some continuous function $u:\A^\N\to\R$, which is equivalent to $-1<q_\varphi^*<0$,
by Proposition \ref{defqstar}.\newline
Suppose that $q<-1$. By \eqref{poire1} we get for all $n$ large enough
\[
\int R_n^{-|q|}\dd\mu_\varphi\leq
nD^2\left(\sum_{A\in\A^n} \mu_{\varphi,A}\big( T_{A}= \tau(A)\big)\mu_\varphi(A)+\sum_{A\in\A^n}\mu_\varphi(A)^{2}\right).
\]
Since $\EuScript{M}_{\varphi}(-1)\leq \Lambda_\varphi$, we obtain 
\[
\overline{\EuScript{R}}_\varphi(q)\leq \Lambda_\varphi.
\]
For $-1<q<0$, for all $n$ large enough we have by \eqref{poire2}
\[
\int R_n^{-|q|}\dd\mu_\varphi\leq
nD^2\!\!\left(\sum_{A\in\A^n} \mu_{\varphi,A}\big( T_{A}= \tau(A)\big)\mu_\varphi(A)+\!\sum_{A\in\A^n}\!\mu_\varphi(A)^{|q|+1}\right).
\]
Since $\EuScript{M}_{\varphi}(q)\leq \Lambda_\varphi$ when $q\leq q_\varphi^*$, we conclude that $\overline{\EuScript{R}}_\varphi(q)\leq \Lambda_\varphi$.
When $q>q_\varphi^*$, $\EuScript{M}_{\varphi}(q)> \Lambda_\varphi$, hence $\overline{\EuScript{R}}_\varphi(q)\leq \EuScript{M}_{\varphi}(q)$.
When $q=-1$,  we have by \eqref{poire3}
\begin{align*}
&\int R_n^{-|q|}\dd\mu_\varphi\leq\\
& n\max\big(D^2,2D\|\varphi\|_\infty\big)\left(\sum_{A\in\A^n} \mu_{\varphi,A}\big( T_{A}= \tau(A)\big)\mu_\varphi(A)+\sum_{A\in\A^n}\mu_\varphi(A)^{2}\right)
\end{align*}
so we conclude that $\overline{\EuScript{R}}_\varphi(q)\leq \Lambda_\varphi$ since $-1<q_\varphi^*$.
Therefore Theorem \ref{theo} is proved.

To conclude the proof of Theorem \ref{theo-suite}, we now suppose that $\varphi$ is of the form $u-u\circ \theta-\log|\A|$ for some continuous function $u:\A^\N\to\R$,
which is equivalent to $q_\varphi^*=-1$, by Proposition \ref{defqstar}. When $\varphi$ is of that form we have
\[
\EuScript{M}_{\varphi}(q)=q\log|\A|\quad\text{and}\quad \Lambda_\varphi=-\log|\A|.
\]
By \eqref{eq:waiting}, $\EuScript{W}_{\varphi}$ coincides with $\EuScript{R}_{\varphi}(q)$ since $P(2\varphi)=P(0-2\log|\A|)=-\log|\A|$ (since for any continuous potential
$\psi$, any continuous function $v$ and any $c\in\R$ one has $P(\psi+v-v\circ \theta+c)=P(\psi)+c$).

\subsection{Proof of Proposition \ref{prop:essential}}\label{sec:proofs_1-zeta}

\begin{proof}[{Proof of Proposition \ref{prop:essential}}]
Recall that
\[
\zeta_{\varphi}(a_0^{n-1})= \mu_{\varphi,a_0^{n-1}}\big( T_{a_0^{n-1}}\neq \tau(a_0^{n-1})\big)
=\mu_{\varphi,a_0^{n-1}}\big( T_{a_0^{n-1}}> \tau(a_0^{n-1})\big)\,.
\]
Since $a_0^{n-1}a_{n-\tau(a_0^{n-1})}^{n-1}=a_0^{\tau(a_0^{n-1})-1}a_0^{n-1}$ we have
\[
(1-\zeta_{\varphi}(a_0^{n-1}))\, \mu_\varphi([a_0^{n-1}])=\mu_\varphi\big(\big[a_0^{\tau(a_0^{n-1})-1}a_0^{n-1}\big]\big).
\]
Let
\[
\underline{\overline{\Lambda}}_\varphi:=\underline{\overline{\lim}}_n \frac{1}{n} \log \sum_{a_0^{n-1}} \mu_\varphi\big(\big[a_0^{\tau(a_0^{n-1})-1}a_0^{n-1}\big]\big).
%\underline{\Lambda}_\varphi &:=\liminf_n \frac{1}{n} \log \sum_{a_0^{n-1}} \mu_\varphi\big(\big[a_0^{\tau(a_0^{n-1})-1}a_0^{n-1}\big]\big)\,.
\]
Let us prove that  $\overline{\Lambda}_\varphi\le\gamma_\varphi^+$. By \eqref{eq:condi3} (with $g=0$) we have
\begin{align}
\nonumber
& \sum_{a_0^{n-1}} \mu_\varphi\big(\big[a_0^{\tau(a_0^{n-1})-1}a_0^{n-1}\big]\big)
 \le D\sum_{a_0^{n-1}} \mu_\varphi\big(\big[a_0^{\tau(a_0^{n-1})-1}\big]\big)\mu_\varphi([a_0^{n-1}])\\
\label{eq:ineq}
& \quad \le D\max_{b_0^{n-1}}\mu_\varphi([b_0^{n-1}])\sum_{a_0^{n-1}}\mu_\varphi\big(\big[a_0^{\tau(a_0^{n-1})-1}\big]\big)\,.
\end{align}
Partitioning according to the values of $\tau(a_0^{n-1})$
\begin{align*}
& \sum_{a_0^{n-1}}\mu_\varphi\big(\big[a_0^{\tau(a_0^{n-1})-1}\big]\big)=\sum_{i=1}^n\sum_{\tau(a_0^{n-1})=i}\mu_\varphi([a_0^{i-1}]).
\end{align*}
Now observe that
\[
\sum_{\tau(a_0^{n-1})=i}\mu_\varphi([a_0^{i-1}])= \mu_\varphi(\{x_0^{n-1}: \tau(x_0^{i-1})=i\}).
\]
This implies in particular that $\sum_{a_0^{n-1}}\mu_\varphi\big(\big[a_0^{\tau(a_0^{n-1})-1}\big]\big)\le n$.
Coming back to \eqref{eq:ineq} we conclude by Proposition \ref{prop-gamma-and-supint} that
\[
\overline{\Lambda}_\varphi\le\limsup_n\frac{1}{n}\log \big(D\, n\max_{b_0^{n-1}}\mu_\varphi([b_0^{n-1}])\big)= \gamma_\varphi^+.
\]
We now prove that $\underline{\Lambda}_\varphi\ge\gamma_\varphi^+$. We need the following lemma whose proof is given below.
\begin{lemma}\label{lem:sublinear}
Let $\varphi$ be a potential of summable variation. Then there exists a sequence of strings $(A_n)_{n\geq 1}$ with $A_n\in \mathcal{A}^n$ such that
\[
\lim_n\frac{1}{n}\log \mu_\varphi([A_n])=\gamma_\varphi^+\quad\text{and}\quad\lim_n \frac{\tau(A_n)}{n}=0\,.
\]
\end{lemma}
For any $n\ge1$ and any string $a_0^{n-1}$, let us introduce the notation $p_\tau(a_0^{n-1})=a_0^{\tau(a_0^{n-1})-1}$ which is the prefix of $a_0^{n-1}$ of size $\tau(a_0^{n-1})$.
Now using \eqref{eq:condi3} (with $g=0$) we have
\[
\sum_{a_0^{n-1}}\! \mu_\varphi\big(\big[a_0^{\tau(a_0^{n-1})-1}a_0^{n-1}\big]\big)
\ge D^{-1}\sum_{a_0^{n-1}}\! \mu_\varphi\big(\big[a_0^{\tau(a_0^{n-1})-1}\big]\big)\mu_\varphi([a_0^{n-1}])
\]
therefore
\[
\frac{1}{n} \log \sum_{a_0^{n-1}} \mu_\varphi\big(\big[a_0^{\tau(a_0^{n-1})-1}a_0^{n-1}\big]\big)
\geq \frac{1}{n} \log\big(D^{-1}\mu_\varphi([\,p_\tau(A_n)])\mu_\varphi(A_n)\big)\,.
\]
We now use \eqref{control-cyl-gmeasures-summable} and \eqref{eq:condi3}. For any point $x\in A_n$, and using the fact that
$\varphi(x)\geq -\inf \varphi>-\infty$ (since $\varphi$ is continuous and $\X$ is compact), we obtain
\begin{align*}
& \frac{1}{n}\log\left(D^{-1}\mu_\varphi([\,p_\tau(A_n)])\,\mu_\varphi([A_n])\right)\\
&\ge \frac{\log(D^{-1}C^{-1})}{n}+\frac{1}{n}\sum_{k=0}^{\tau(A_n)-1}\varphi(x_k^\infty)+\frac{1}{n}\log\mu_\varphi([A_n]) \\
&\ge \frac{\log(D^{-1}C^{-1})}{n}+(\inf \varphi)\frac{\tau(A_n)}{n}+\frac{1}{n}\log\mu_\varphi([A_n]) \,.
\end{align*}
Therefore by Lemma \ref{lem:sublinear} we get
\[
\underline{\Lambda}_\varphi\ge\liminf_n\frac{1}{n}\log\mu_\varphi([A_n])= \gamma_\varphi^+
\]
which concludes the proof of the proposition.
\end{proof}

\begin{proof}[Proof of Lemma \ref{lem:sublinear}]
We know that $\gamma_\varphi^+$ exists by Proposition \ref{prop-gamma-and-supint}. This means that  there exists a sequence  of strings $(B_i)_{i\geq 1}$ with $B_i\in\mathcal{A}^i$, such that
\[
\lim_i\frac{1}{i}\log \mu_\varphi([B_i])=\gamma_\varphi^+.
\]
Now, let $(k_i)_{i\geq 1}$ be a diverging sequence of positive integers. Then, for each $i\ge1$, consider the string $B_{i}^{k_i}$ obtained by concatenating $k_i$ times the string $B_{i}$:
\[
B_{i}^{k_i}=\underbrace{B_i\cdots B_i}_{k_i\,\,\text{times}}\,.
\]
Using \eqref{eq:condi3} (with $g=0$) we have
\begin{equation}\label{eq:both_gen}
\mu_\varphi([B_{i}])^{k_i} D^{-k_i}\le \mu_\varphi([B_{i}^{k_i}])\le\mu_\varphi([B_{i}])^{k_i} D^{k_i}\,.
\end{equation}
For any $n\ge1$, take the unique  integer $i_n$ such that $n\in [ik_i,(i+1)k_{i}-1]$ (we omit the subscript $n$ of $i_n$ to alleviate notations).
We write $r=r(i,n):=n-ik_i$ and let $A_n=B_{i}^{k_i} B_{r(i)}$ where $B_{r(i)}$ is the beginning (or prefix) of size $r(i)$ of $B_i$:
\[
A_n=\underbrace{B_i\cdots B_i}_{k_i\,\,\text{times}}B_{r(i)}.
\] 
Therefore
\[
\frac{\tau(A_n)}{n}\le \frac{i}{ik_i+r(i)}\xrightarrow[]{n\to\infty}0
\]
since $i$ (and therefore $k_i$)  diverges as $n\rightarrow\infty$. 
Now observe that  
\begin{align*}
\frac{\log \mu_\varphi([B_{i}^{k_i+1}])}{n}\le\frac{1}{n}\log\mu_\varphi([A_n])\le\frac{\log \mu_\varphi([B_{i}^{k_i}])}{n}
\end{align*}
which gives, using  \eqref{eq:both_gen},
\[
\frac{\log \big(\mu_\varphi([B_{i}])^{k_i+1}D^{-(k_i+1)}\big)}{ik_i+r}\le \frac{\log\mu_\varphi([A_n])}{n}\le\frac{\log \big(\mu_\varphi([B_{i}])^{k_i}D^{k_i}\big)}{ik_i+r}.
\]
The right-hand side is equal to
\[
\frac{k_i}{k_i+\frac{r}{i}}\left(\frac{1}{i}\log \mu_\varphi([B_{i}])+\frac{\log D}{i}\right)
\]
and $\frac{1}{i}\log \mu([B_{i}])\rightarrow\gamma_\varphi^+$, whereas $k_i (k_i+\frac{r}{i})^{-1}\rightarrow1$.
The limit of the left-hand side is also $\gamma_\varphi^+$. This concludes the proof of the lemma. 
\end{proof}

\appendix

\section{Proof of inequalities \eqref{eq:condi3}}

To alleviate notation, we simply write $\mu$ instead of $\mu_\varphi$. Recall that $\mu_{a_0^{m-1}}$ is the conditional measure $\mu(\,\cdot\, \cap [a_0^{m-1}])/\mu([a_0^{m-1}])$ (which is well defined).
Given $g\geq0$, $m,n\geq 1$ and $a_0^{m-1}, b_0^{n-1}$ we first observe that
\begin{align}
\nonumber
& \frac{\mu\big([a_0^{m-1}]\cap \theta^{-m-g}[b_0^{n-1}]\big)}{\mu([a_0^{m-1}])\mu([b_0^{n-1}])}\\
\nonumber
&=\frac{\sum_{a_m^{m+g-1}\in \A^g}\mu_{a_0^{m-1}}\big([a_m^{m+g-1}]\cap \theta^{-m-g}[b_0^{n-1}])}{\mu([b_0^{n-1}]\big)}\\
\nonumber
&=\sum_{a_m^{m+g-1}\in \A^g}\frac{\mu_{a_0^{m+g-1}}(\theta^{-m-g}[b_0^{n-1}]) }{\mu([b_0^{n-1}])}\mu_{a_0^{m-1}}\big([a_m^{m+g-1}]\big)\,.
\end{align}
To prove \eqref{eq:condi3}, it is enough to prove that
\begin{align}
\label{ze-borne}
C^{-3}\le \frac{\mu_{a_0^{m+g-1}}(\theta^{-m-g}[b_0^{n-1}]) }{\mu([b_0^{n-1}])}\le C^3. 
\end{align}

To prove \eqref{ze-borne}, it suffices to prove that 
\begin{equation}\label{rizoto}
C^{-3}\leq \frac{ \mu\big([a_0^{p-1}]\cap \theta^{-p}[b_0^{q-1}]\big) }{\mu\big([a_0^{p-1}]\big)\, \mu\big([b_0^{q-1}]\big)}\leq C^3
\end{equation}
for all $p,q\geq 1$ and $a_0^{p-1}, b_0^{q-1}$. By \eqref{control-cyl-gmeasures-summable} we have for any $x\in [a_0^{p-1}]\cap \theta^{-p}[b_0^{q-1}]$
\begin{equation}\label{rizo}
C^{-1}\leq \frac{\mu\big([a_0^{p-1}]\cap \theta^{-p}[b_0^{q-1}]\big)}{\exp\big(\sum_{k=0}^{p+q-1}\varphi(x_k^\infty)\big)}\leq C,
\end{equation}
and for any $y\in [a_0^{p-1}]$, $z\in [b_0^{q-1}]$, we also have
\begin{equation}\label{to}
C^{-2}\leq \frac{\mu\big([a_0^{p-1}]\big)\, \mu\big([b_0^{q-1}]\big)}{\exp\big(\sum_{k=0}^{p-1}\varphi(y_k^\infty)+\sum_{k=0}^{q-1}\varphi(z_k^\infty)\big)}\leq C^2.
\end{equation}
Taking $y=x$ and $z=\theta^p x$ and combining \eqref{rizo} and \eqref{to}, we obtain \eqref{rizoto}.
The proof of \eqref{eq:condi3} is complete.

%%%%%%% BIBLIOGRAPHY
\bibliography{sandro_jr_biblio_28mars-2022.bib}
\bibliographystyle{acm}

\end{document}